\title[Brown-Adams representability]{Brown--Adams representability for triangulated categories with locally coherent cohomology}
\author{George Ciprian Modoi}
\address[George Ciprian Modoi]{Babe\c s--Bolyai University, Faculty of Mathematics and Computer Science \\
1, Mihail Kog\u alniceanu, 400084 Cluj--Napoca, Romania}
\email[George Ciprian Modoi]{cmodoi@math.ubbcluj.ro}
\renewcommand{\iff}{if and only if }
\newcommand{\la}{\longrightarrow}
\newcommand{\Z}{\mathbb{Z}}
\DeclareMathOperator{\Hom}{Hom}
\DeclareMathOperator{\End}{End}
\DeclareMathOperator{\Ker}{Ker}
\DeclareMathOperator{\colim}{\mathrm{colim}}
\DeclareMathOperator{\Colim}{\mathrm{Colim}}
\newcommand{\A}{\mathcal{A}}
\newcommand{\B}{\mathcal{B}}
\newcommand{\I}{\mathcal{I}}
\newcommand{\C}{\mathcal{C}}
\newcommand{\D}{\mathcal{D}}
\newcommand{\G}{\mathcal{G}}
\newcommand{\CS}{\mathcal{S}}
\newcommand{\ModK}{\mathrm{Mod}\textrm{-}R}
\newcommand{\ModR}{\mathrm{Mod}\textrm{-}R}
\newcommand{\modK}{\mathrm{mod}\textrm{-}R}
\newcommand{\md}[1]{\hbox{\rm mod-}{#1}}
\newcommand{\Qcoh}{\hbox{\rm Qcoh-}}
\newcommand{\op}{^\mathrm{op}}
\newcommand{\Mod}[1]{\hbox{\rm Mod-}{#1}}
\newcommand{\add}{\mathrm{add}}
\newcommand{\Add}{\mathrm{Add}}
\newcommand{\Spec}[1]{\mathrm{Spec}({#1})}
\newcommand{\Der}[1]{\mathbf{D}({#1})}
\newcommand{\Dec}[1]{\mathbf{D}_{coh}({#1})}
\newcommand{\Decm}[1]{\mathbf{D}^{-}_{coh}({#1})}
\newcommand{\Derq}[1]{\mathbf{D}_{Qcoh}({#1})}
\newcommand{\Dep}[1]{\mathbf{D}_{perf}({#1})}
\newcommand{\Derb}[1]{\mathbf{D}^{\mathrm b}_{coh}({#1})}
\newcommand{\thick}{\mathrm{thick}\,}
\newcommand{\smd}{\mathrm{smd}\,}
\newcommand{\hocolim}{\underrightarrow{\mathrm{hocolim}}}
\newcommand{\yon}{\mathbf{y}}
\newcommand{\ppdim}{\mathrm{pp.dim}}
\newcommand{\pdim}{\mathrm{p.dim}}
\theoremstyle{plain}
\newtheorem{thm}{Theorem}[section]
\newtheorem{lemma}[thm]{Lemma}
\newtheorem{prop}[thm]{Proposition}
\newtheorem{cor}[thm]{Corollary}
\newtheorem{thm-intro}{Theorem}
\theoremstyle{definition}
\theoremstyle{remark}
\newtheorem{rem}[thm]{Remark}
\begin{document}


\begin{abstract}
In this paper, we deal with two types of representability. The first is a variant of the Brown representability theorem in the spirit of Rouquier \cite{R08} and Neeman \cite{N18}. The second is a variant of the Brown-Adams representability. If $A$ is a dg-algebra over a commutative noetherian ring $R$, such that $A$ has coherent cohomology, it is shown that every cohomological (contravariant) functor $M:\Dep A\to\ModR$, also satisfying $M(A[-n])\in\modK$, for all $n\in\Z$ is isomorphic to $\Der A(-,X)|_{\Dep A}$, where $X\in\Der A$ is such that $H^n(X)$ is coherent for all $n\in\Z$. 

\end{abstract}

\maketitle

\section*{Introduction}

The importance of triangulated categories in modern mathematics is difficult to overestimate. However, triangulated categories have some well-known drawbacks. One of them is that they are not (co)complete and Freyd's adjoint functor theorem does not apply. Brown representability comes as a substitute since in the presence of this property, triangulated functors commuting with (co)products have adjoints.   
Let $\D$ be a triangulated $R$-linear category, where $R$ is a base commutative ring. The theory of Brown representability looks for conditions (necessary and sufficient) for a contravariant functor $\D\to\ModR$ to be representable. Initially, this was done by Brown itself for the homotopy category of spectra, \cite{B62}; this involved contravariant functors sending coproducts into products and was generalized for compactly, and even for well generated triangulated categories. The common feature of these categories is that they that can be constructed in some way from a small subcategory (see \cite[Chapter 8]{N01}). Initially, the objects in the small category were supposed to be nice enough, but later it became clear that more important was the fact that they are only a set; see \cite{N09}. A different, but somehow related problem is the so-called Brown-Adams representability. Now we consider a subcategory $\C$ and the restricted Yoneda functor $\yon X=\D(-,X)|_\C$, mapping an object $X\in\D$ to a contravariant functor $\C\to\ModR$. The problem is to identify functors $\C\to\ModR$ and natural transformations between them that are of the form $\yon X$, for some $X\in\D$, respectively $\yon(f)$, for some $f:X\to Y$ in $\D$. If $\C=\D$, then we recover the Brown representability above, where representability is taken in the categorical sense. But the prototypical example is  when $\D$ is compactly generated and $\C=\D^c$ is the category of compact objects. For this case, we know that every cohomological functor $\C\to\ModR$ is of the form $\yon X$, for some $X\in\D$ and every natural transformation between cohomological functors comes from a map in $\D$ (that is, $\yon$ is full and its essential image consists of all cohomological functors) \iff pure projective dimension of $\D$ is $\leq 1$; see \cite{CS98}.

In this paper, we consider an $R$-linear category $\D$. All subcategories will be full and all functors will be $R$-linear. 
The first Section gives a very general criterion of representability, see Corollary \ref{c:rep-iff-fp}. Here we do not need the entire triangulated structure on $\D$, the existence of weak (co)kernels being enough. In this context, we also find a criterion for a subcategory to be coreflective; see Corollary \ref{c:tri+precover}.

Brown representability for categories without (co)products seems to be a more intricate problem than the classical one. To the best knowledge of the author, there are only three instances of results of this kind in the literature, namely \cite[Theorem 1.3]{BvdB03}, \cite[Theorem 4.20]{R08} and \cite[Theorem 4.2]{N18}. In Section 2 we combine ideas from the last two to give a new version which is intended to be as general as possible. Theorem \ref{thm:main} contains this result. Essentially, the idea is the one already mentioned: all objects of the category have to be constructed in a finite number of steps from a small set by taking direct summands, shifts, and triangles. The method also works for categories with coproducts: Allowing not only summands, shifts, and triangles but also coproducts,  we get Brown representability for "big" categories as in \cite{N09}; see Corollary \ref{c:rep-4-big}.  

Section 3 contains a relativization of the results concerning the relationship between the fullness of the restricted Yoneda functor $\yon:\D\to\Mod{\D^c}$ and the pure projective dimension of objects in $\D$. 

In Section 4 we assume that $\D$ has a single compact generator $G$, and we show that the restriction of $\yon$ to an appropriate subcategory $\D_{lcoh}$ of $\D$ is full. Moreover, we also identify the essential image of this restriction; see Theorem \ref{t:BA}. The idea of the proof comes from a combination between \cite[Section 7]{N18} and the so-called "two-sided approximation" of \cite{B22}. Note that under suitable hypotheses, namely $R$ is noetherian and $\D(G,G[n])$ finitely generated for all $n$, the category $\D_{lcoh}$ contains exactly those objects $X\in\D$ for which $\D(C,X)$ is finitely generated for all $C\in\D^c$, and the essential image of the functor $\yon:\D_{lcoh}\to\ModR$ consists exactly from those functors (modules) $M:\D^c\to\ModR$ satisfying the property that $M(C)$ is finitely generated, for all $C\in\D^c$. 

Section 5 contains an application of our results, where the category $\D=\Der A$ is the derived category of a dg-algebra $A$. It is clear that our results are more general than the corresponding results in \cite{N18} and \cite{B22}. Indeed, these papers assume (weakly) approximability to show Brown-Adams representability (i. e., that the restriction of the functor $\yon$ is full). This assumption is not necessary in Proposition \ref{p:BA4dg}.  Here $R$ is supposed to be noetherian and $A$ is a dg-algebra with coherent cohomology. Then $\D_{lcoh}$ coincides with $\Dec A$, the subcategory of $\Der A$ consisting of all $X\in\Der A$ having coherent cohomology.

\section{A general representability criterion}

Throughout this section $R$ is a commutative ring with one and $\D$ is an additive $R$-linear category. We denote by $\ModR$ the category of all $R$-modules and by $\modK$ the category of finitely presented $R$-modules. Let $M:\D\to\ModR$ be a contravariant functor. It is customary to regard the category $\D$ as being a ring with several objects, and further the functor $M$ as being a right $\D$-module. Remark that the hypothesis on $\D$ implies that it has finite biproducts. We call $\D$ {\em karoubian}, if every idempotent endomorphism in $\D$ splits. In what follows, we look for some necessary and sufficient conditions for $M$ to be representable, that is, to be of the form $\D(-,X)$ for some $X\in\D$. The functor $M$  is called {\em finitely generated} if there is a natural epimorphism $\D(-,D)\to M$, for some $D\in\D$. 
Dually we say that $M$ is a subfunctor of a representable functor if there is a natural monomorphism $M\to\D(-,D)$. 
Furthermore, the functor $M$ is called {\em finitely presented} if there is an exact sequence
\[\D(-,C)\to\D(-,D)\to M\to 0.\] It is routine to show that $M$ is finitely presented if and only if it is finitely generated and for every $D\in\D$, the kernel of any natural epimorphism $\D(-,D)\to M$ is also finitely generated. 

For two contravariant functors $M,N:\D\to\ModR$, denote by $\Hom_\D(M,N)$ the class of all natural transformations between them.
We denote by $\Mod\D$ the class of all contravariant functors $\D\to\ModR$; as we noticed, we view them as $\D$-modules. In order to organize $\Mod\D$ as a category with the morphisms $\Hom_\D(M,N)$ we usually have to enlarge the universe. However, this set-theoretic issue does not appear in the following two situations:
If $\D$ is essentially small, then $\Hom_\D(M,N)$ is actually a set; therefore $\Mod\D$ belongs to the same universe as $\D$.
If $M$ is finitely presented, it follows by the Yoneda lemma that $\Hom_\D(M,N)$ is a set. Therefore we can construct, without enlarging the universe, the category whose objects are all finitely presented contaravariant functors $\D\to\ModR$ and whose morphisms are natural transformations. We denote by $\md\D$ this category and we call it the category of finitely presented (right) $\D$-modules.

For the rest of this section, fix a $\D$-module $M$.
Consider now a (full) subcategory $\A\subseteq\D$. We say that $M$ is {\em$\A$-finitely generated} if there is $A\in\A$ and a natural transformation $\alpha:\D(-,A)\to M$ such that \[\alpha^X:\D(X,A)\to M(X)\to 0\] is exact for all $X\in\A$. Remark that $M$ is finitely generated exactly if it is $\D$-finitely generated and the two definitions are consistent, in the sense that $M$ is $\A$-finitely generated if and only if its restriction $M|_\A$ is finitely generated. Indeed by Yoneda lemma both natural transformations $\D(-,A)\to M$, respectively $\A(-,A)\to M|_\A$ are in correspondence with an element in $M(A)$, therefore given one we can construct the other one.

Assume now that $\D$ has {\em weak kernels}, that is, given $Y\to Z$ in $\D$, there is $X\to Y$ such that the sequence of functors \[\D(-,X)\to\D(-,Y)\to\D(-,Z)\] is exact. In this case, it is clear that the composite morophism $X\to Y\to Z$ vanishes, and we call it a {\em weak kernel sequence}. The notions {\em weak cokernel} and {\em weak cokernel sequence} are defined dually. By \cite[Theorem 1.4]{F66} we know that the existence of weak kernels is equivalent to the fact that $\md\D$ is abelian (see also \cite[Lemma 2.2]{K98}). Moreover, projective objects in $\md\D$ are direct summands of representable functors, see \cite[Proposition 2.3]{K98}. Consider further an abelian category $\A$, and a covariant functor $F:\D\to\A$. By \cite[Universal Property 2.1]{K98} there is a unique, up to a natural isomorphism, right exact (covariant) functor $\widehat F:\md\D\to\A$ such that $\widehat F\D(-,X)=F(X)$ for all $X\in\D$. By a combination of \cite[Lemma 2.5 and Proposition 2.3]{K98}, we infer that the functor $\widehat F$ is exact if and only if $F$ is {\em weak left exact}, that is, it sends every weak kernel sequence $X\to Y\to Z$ to an exact sequence $F(X)\to F(Y)\to F(Z)$ in $\A$. Remark that it is enough to check the weak left exactness for any particular choice of a weak kernel for $Y\to Z$. In fact, if $X\to Y\to Z$ and $X'\to Y\to Z$ are two weak kernel sequences, then the maps $X\to Y$ and $X'\to Y$ factor through each other, therefore the images of the maps $F(X)\to F(Y)$ and $F(X')\to F(Y)$ coincide.
If $F:\D\to\A$ is now a contravariant functor, we can apply the above statements for it, seen as a covariant functor $\D\to\A\op$ and we get a unique contravariant functor $\widehat F:\md\D\to\A$ such that $\widehat F\D(-,X)=F(X)$ for all $X\in\D$ and $\widehat F$ sends cokernels to kernels.  Specializing in the discussion above in the case where the codomain category is $\A=\ModR$, we obtain a generalization (and a completion) of \cite[Lemma 1]{M13}:

\begin{prop}\label{p:hatH-rep} Assume $\D$ has weak kernels and let $M:\D\to\ModR$ be a contravariant functor. 
\begin{enumerate}[{\rm a)}]
    \item There is a natural isomorphism $\widehat M\cong\Hom_\D(-,M)$.
In particular, $M$ is finitely presented if and only if $\widehat M$ is representable.
    \item If, in addition, $M$ is a subfunctor of a representable functor, then every kernel of a natural transformation $\D(-,C)\to M$ is finitely generated. Consequently $M$ is finitely presented \iff it is finitely generated. 
\end{enumerate}
\end{prop}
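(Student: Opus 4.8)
The plan is to deduce both parts from the universal property of $\md\D$ recalled above together with the Yoneda lemma; no genuinely new construction is needed.

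For a), I would first check that $F\mapsto\Hom_\D(F,M)$ is a well-defined contravariant functor $\md\D\to\ModR$: choosing a presentation $\D(-,C)\to\D(-,D)\to F\to 0$ of an object $F$ of $\md\D$ embeds $\Hom_\D(F,M)$ into $\Hom_\D(\D(-,D),M)\cong M(D)$, so it is an honest $R$-module, and functoriality and $R$-linearity are clear. This functor sends cokernels in $\md\D$ to kernels in $\ModR$ --- the usual left exactness of a Hom functor in its first variable, available here because $\md\D$ is abelian --- and by the Yoneda lemma $\Hom_\D(\D(-,X),M)\cong M(X)$ naturally in $X\in\D$. Hence $\Hom_\D(-,M)|_{\md\D}$ has exactly the two properties that characterize $\widehat M$ up to natural isomorphism (by \cite[Universal Property 2.1]{K98}), whence $\widehat M\cong\Hom_\D(-,M)|_{\md\D}$. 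For the ``in particular'': if $M$ is finitely presented it is an object of $\md\D$, and since the morphisms of $\md\D$ are by definition all natural transformations, $\widehat M\cong\Hom_\D(-,M)|_{\md\D}=\Hom_{\md\D}(-,M)$ is representable; conversely, if $\widehat M\cong\Hom_{\md\D}(-,N)$ with $N\in\md\D$, then evaluating at the representable functors $\D(-,X)$ and applying the Yoneda lemma inside $\md\D$ gives $M(X)\cong N(X)$ naturally in $X$, so $M\cong N$ in $\Mod\D$ and $M$ is finitely presented.

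For b), let $\iota:M\to\D(-,E)$ be a natural monomorphism and let $\alpha:\D(-,C)\to M$ be any natural transformation. The composite $\iota\alpha:\D(-,C)\to\D(-,E)$ corresponds under Yoneda to a morphism $f:C\to E$ in $\D$, and $\Ker\alpha=\Ker(\iota\alpha)$ because $\iota$ is monic. Choosing a weak kernel $C'\to C$ of $f$ (possible as $\D$ has weak kernels), the sequence $\D(-,C')\to\D(-,C)\to\D(-,E)$ is exact, the first map induced by $C'\to C$ and the second by $f$; hence $\Ker(\iota\alpha)=\Img(\D(-,C')\to\D(-,C))$ is a quotient of $\D(-,C')$ and therefore finitely generated. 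This is the first assertion of b). Applying it to a natural epimorphism $\D(-,D)\to M$ --- which exists as soon as $M$ is finitely generated --- and invoking the characterization of finitely presented functors recalled above yields that $M$ is finitely presented \iff it is finitely generated.

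The argument is essentially bookkeeping with the cited results, so I do not foresee a serious obstacle; the point that deserves the most care is the converse in a). There one must read ``$\widehat M$ representable'' as representability by an object of the abelian category $\md\D$ via its internal Hom --- legitimate precisely because the presence of weak kernels makes $\md\D$ abelian and keeps those Hom-classes small --- and one must use that the Yoneda identification on representables is natural in the $\D$-variable, since that is what promotes the objectwise isomorphisms $M(X)\cong N(X)$ to an isomorphism $M\cong N$ of $\D$-modules.
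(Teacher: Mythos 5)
Your proposal is correct and follows essentially the same route as the paper: in a) you identify $\Hom_\D(-,M)$ with $\widehat M$ via its exactness properties, the Yoneda lemma, and the uniqueness in the universal property of $\md\D$, and in b) you compose $\alpha$ with the monomorphism into a representable, take a weak kernel of the resulting morphism in $\D$, and observe that the induced map from the corresponding representable onto $\Ker\alpha$ is an epimorphism. The extra care you take with the converse of the ``in particular'' clause (reading representability inside $\md\D$ and using naturality in the $\D$-variable to get $M\cong N$) is merely glossed as obvious in the paper, but it is exactly the intended argument.
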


\begin{proof}
 a) The isomorphism follows by the facts that $\Hom_D(-,M)$ sends cokernels to kernels, and $\Hom_\D(\D(-,X),M)\cong M(X)$, according to the Yoneda lemma. The last statement is now obvious, $\widehat M$ being represented by $M$; note that the notation $\Hom_\D(-,M)$ makes sense even for $M\notin\md\D$.

 b) From the hypothesis there is a natural monomorphism $M\to\D(-,D)$, for some $D\in\D$. If $\alpha:\D(-,C)\to M$ is a natural transformation with the kernel $K$, then the composed map $\D(-,C)\stackrel{\alpha}\to M\to\D(-,D)$ is of the form $g^*:\D(-,C)\to\D(-,D)$, for some $g:C\to D$, by Yoneda. It follows $K=\Ker\alpha=\Ker(g^*)$. Complete $g$ to a weak kernel sequence $B\stackrel{f}\to C\stackrel{g}\to D$. The diagram with exact rows
 \[\diagram &\D(-,B)\rto^{f^*}&\D(-,C)\ddouble\rto^{g^*}&\D(-,D)\ddouble\\
        0\rto&K\rto&\D(-,C)\rto^{g^*}&\D(-,D)\enddiagram\] can be completed commutatively with a map $\D(-,B)\to K$, which must be a natural epimorphism. 
\end{proof}

Consider again a contravariant functor $F:\D\to\A$. In this case, the exactness of $\widehat F$ is clearly equivalent to the exactness of the sequence $F(Z)\to F(Y)\to F(X)$ for every weak kernel sequence $X\to Y\to Z$ in $\D$. We call {\em weak right exact} a contravariant functor $F$ satisfying this last property. Note that when we use this terminology we see the contravariant functor $F:\D\to\A$ rather as a covariant functor $\D\op\to\A$; this is the reason we refer to contravariant functors and we do not identify them neither with covariant functors $\D\op\to\A$ nor with functors $\D\to\A\op$ because we want to see them in one way or in another depending on the context.

An abelian category is said to be {\em Frobenius}, provided that projective and injective objects coincide. 

\begin{cor}\label{c:rep-iff-fp}
 Assume $\D$ has weak kernels and that $\md\D$ is Frobenius. Let $M:\D\to\ModR$ be a contravariant functor. If $M$ is weak right exact, then $M$ is isomorphic to a direct summand of a representable functor \iff $M$ is finitely presented. If, in addition, $\D$ is karoubian, then $M$ is representable \iff $M$ is finitely presented.
\end{cor}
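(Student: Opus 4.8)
The plan is to apply Proposition \ref{p:hatH-rep} to reduce everything to a statement about the functor $\widehat M:\md\D\to\ModR$, and then exploit the Frobenius hypothesis on $\md\D$. First I would observe that since $M$ is weak right exact, the associated right exact functor $\widehat M:\md\D\to\ModR$ is in fact exact (this is the contravariant incarnation of the equivalence "weak left exact $\Leftrightarrow$ $\widehat F$ exact" recalled in the text, applied to $M$ seen as a covariant functor $\D\op\to\ModR$). In particular $\widehat M$ sends projective objects of $\md\D$ to projective, hence injective, objects — no, I should be more careful: what I actually want is the other implication, so let me reorganize.

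The key observation is the following. Suppose $M$ is finitely presented. Then by Proposition \ref{p:hatH-rep}a) the functor $\widehat M\cong\Hom_\D(-,M)$ is representable, say $\widehat M\cong\md\D(-,P)$ for some $P\in\md\D$; since representable functors on $\md\D$ are (by Yoneda, as $\widehat M\,\D(-,X)=M(X)$) of the form $\md\D(-,P)$ with $P$ the object representing $M$, and since $\widehat M$ is exact, $P$ is a projective object of $\md\D$. By the Frobenius hypothesis, $P$ is also injective in $\md\D$. Now I would invoke the structure of projectives in $\md\D$ recalled before the corollary (\cite[Proposition 2.3]{K98}): $P$ is a direct summand of a representable functor $\D(-,D)$. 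The point is then to upgrade "direct summand of a representable functor" to "representable functor" using karoubianness: write $\D(-,D)\cong P\oplus Q$ in $\md\D$; this splitting corresponds to an idempotent $e\in\End_{\md\D}(\D(-,D))\cong\End_\D(D)$ (Yoneda again), and if $\D$ is karoubian then $e$ splits in $\D$, giving $D\cong D_1\oplus D_2$ with $\D(-,D_1)\cong P$. Running this backwards: if $M$ is a direct summand of a representable functor then $\widehat M$ is a direct summand of $\md\D(-,\D(-,D))$, which is representable, hence $\widehat M$ is a projective, thus (Frobenius) injective, hence — being a direct summand of a representable — I need finite presentation, which is immediate since $M$ is a summand of $\D(-,D)$ and summands of finitely presented modules over a category with weak kernels are finitely presented (as $\md\D$ is abelian).

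Assembling the logic: "$M$ finitely presented $\Rightarrow$ $M$ is a direct summand of a representable" uses exactness of $\widehat M$ (from weak right exactness), Proposition \ref{p:hatH-rep}a), the Frobenius property, and the description of projectives; "$M$ a direct summand of a representable $\Rightarrow$ $M$ finitely presented" is the easy direction and does not even need the Frobenius hypothesis. For the last sentence, karoubianness converts "direct summand of representable" into "representable" via the splitting-of-idempotents argument above, and conversely representable functors are trivially finitely presented.

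I expect the main obstacle to be bookkeeping around the duality/variance: $\widehat M$ is defined so that it sends cokernels to kernels, and one must be careful that "$\widehat M$ exact" is the correct reformulation of "$M$ weak right exact" (as opposed to weak left exact), and that "representable in $\md\D$" really does mean "of the form $\md\D(-,P)$" with the $P$ one expects. Once the variance conventions are pinned down, the argument is a short chain of known facts; the genuinely new input is only the interplay of the Frobenius condition with Krause's description of the projectives in $\md\D$, together with the karoubian refinement at the end.
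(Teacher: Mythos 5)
Your route is the paper's own: apply Proposition \ref{p:hatH-rep} a) to see that for $M$ finitely presented the functor $\widehat M\cong\Hom_\D(-,M)$ is representable (by $M$ itself as an object of $\md\D$), use weak right exactness to get exactness of $\widehat M$, feed this into the Frobenius hypothesis, and finish with Krause's description of the projectives of $\md\D$ plus idempotent splitting for the karoubian refinement. The one flaw is exactly the variance point you flagged: exactness of the contravariant functor $\Hom_{\md\D}(-,M)$ says that $M$ is an \emph{injective} object of $\md\D$, not a projective one; it is the Frobenius hypothesis that then upgrades injective to projective, and only projectivity gives, via \cite[Proposition 2.3]{K98}, that $M$ is a direct summand of a representable functor. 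As written, you deduce projectivity directly from exactness (a false inference in general) and then invoke Frobenius to get an injectivity you never use, which would make the Frobenius hypothesis superfluous. Swapping the two adjectives repairs this, and the corrected argument coincides with the paper's proof; your explicit idempotent-splitting step for the karoubian case and the easy converse (representables are finitely presented, and summands of finitely presented modules stay finitely presented since $\md\D$ is abelian) are fine and are left implicit in the paper.
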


\begin{proof}
A representable functor is obviously finitely presented, therefore so is any direct summand of it. Conversely, if $M$ is finitely presented, that is, $M\in\md\D$, it follows by Proposition \ref{p:hatH-rep} a) that $\widehat M\cong\Hom_\D(-,M)$ is representable. But $M$ is weak right exact, and therefore $\widehat M$ has to be exact. This means $M$ is an injective object in $\md\D$. By hypothesis, injective objects in $\md\D$ coincide with projective objects and are direct summands of representable functors.
\end{proof}

Let $M:\D\to\ModR$ be a contravariant functor. If $\D$ is essentially small, then we know that $M$ is the colimit of all representable functors mapping to it. More precisely, we consider the category $\D/M$ (which is also essentially small) whose objects are pairs of the form $(D,x)$ with $D\in\D$ and $x\in M(D)$, and the morphisms between $(C,y)$ and $(D,x)$ are those maps $f:C\to D$ in $\D$ for which $M(f)(x)=y$.  Then $M\cong\colim_{(D,x)\in\D/M}\D(-,D)$, that is, $M$ is the colimit of the functor $\D/M\to\Mod\D, (D,x)\mapsto\D(-,D)$, where the canonical maps of the colimit are the natural transformations $\xi:\D(-,D)\to M$ associated to $x\in M(D)$ via the Yoneda isomorphism.  If we want to express the same result for categories which are not essentially small, one option is to enlarge the universe. But the existence of some colimits and adjoints is sensitive with respect to this enlargement, therefore, in the following we will make another choice, namely we remain in the same universe, but we consider big colimits only locally. More precisely, let $\I$ be a category. An $\I$-shaped diagram of contravariant functors is an assignment $i\mapsto M_i$, where $M_i:\D\to\ModR$
for any object $i$ of $\I$ and $\alpha\mapsto M_\alpha$, where $M_\alpha$ is a natural transformation $M_i\to M_j$, for every map $\alpha:i\to j$ in $\I$, satisfying the obvious functorial properties. We say that the contravariant functor $M:\D\to\ModR$ is the large colimit of $(M_i)_{i\in\I}$, and we denote $M=\Colim_{i\in\I}M_i$ if there are natural transformations $\xi_i:M_i\to M$ such that the cocone $\left(\xi_i^X:M_i(X)\to M(X)\right)_{i\in\I}$ is a colimit cocone in $\ModR$ for all $X$ in $\D$, that is \[M(X)\cong\colim_{i\in\I}M_i(X),\hbox{ for all }X\in\D.\] Note that the last colimit is genuine in $\ModR$ even if it is indexed over a possibly large class. It is clear that, if they exist,  colimits indexed over the large class $\I$ are exact in $\ModR$, provided that $\I$ is directed; consequently, large directed colimits are component-wise exact. Furthermore, the Yoneda lemma implies that if $M\cong\Colim_{i\in\I}M_i$ as before and $X\in\D$ then $\Hom_\D(\D(-,X),M)\cong\colim_{i\in\I}M_i(X)$. Therefore if we suppose further that $\I$ is directed, and $N$ is finitely presented, the exactness of $\I$-indexed colimits leads to existence of an isomorphism \[\Hom_\D(N,M)\cong\colim_{i\in\I}\Hom_\D(N,M_i).\] 
An example of a big colimit as before is obtained by observing that for any category $\D$ we have \[M\cong\Colim_{(D,x)\in\D/M}\D(-,D).\] 
We call {\em flat} a contravariant functor $M:\D\to\ModR$ with the property that there is a directed category $\I$ and an $\I$-shaped diagram $\left(\D(-,D_i)\right)_{i\in\I}$ consisting of representable functors such that \[M\cong\Colim_{i\in\I}\D(-,D_i).\]

\begin{prop}\label{p:wlf}  
Assume $\D$ is an additive $R$-category with weak kernels and weak cokernels. Let $M:\D\to\ModR$ be a contravariant functor. Then 
\begin{enumerate}[{\rm a)}]
\item $M$ is flat if and only if $M$ is weak left exact.
\item If $M$ is weak left exact and finitely presented, then $M$ is a direct summand of a representable functor.
\end{enumerate}
\end{prop}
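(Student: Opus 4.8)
The plan is to prove the two statements in sequence, with part a) supplying the technical heart.

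For part a), the implication "flat $\Rightarrow$ weak left exact" should be straightforward: each representable functor $\D(-,D_i)$ is weak left exact (a weak kernel sequence $X\to Y\to Z$ is sent by $\D(-,D_i)$ to an exact sequence essentially by definition of weak kernel, read at the object $D_i$... wait, more carefully: a weak kernel sequence is one for which $\D(-,X)\to\D(-,Y)\to\D(-,Z)$ is exact, so evaluating at $D_i$ gives exactness of $\D(D_i,X)\to\D(D_i,Y)\to\D(D_i,Z)$, which is precisely weak left exactness of $\D(-,D_i)$ on that sequence). Since $M\cong\Colim_{i\in\I}\D(-,D_i)$ with $\I$ directed, and large directed colimits are component-wise exact (as recorded in the excerpt), the functor $M$ inherits weak left exactness: apply the directed colimit to the exact sequences $\D(X,D_i)\leftarrow\D(Y,D_i)\leftarrow\D(Z,D_i)$... one must be slightly careful about variance, but the colimit is over $i$ and the exact sequence is in the $\D$-variable, so this goes through. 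For the converse "weak left exact $\Rightarrow$ flat", I would use the canonical presentation $M\cong\Colim_{(D,x)\in\D/M}\D(-,D)$ that the excerpt already established for arbitrary $\D$. The issue is that the index category $\D/M$ need not be directed. The strategy is to show that under weak left exactness one can replace $\D/M$ by a directed (or at least filtered, then cofinally directed) subcategory, or to verify directly that the colimit diagram is "filtered enough." Concretely, I expect the argument to mirror the classical Lazard-type theorem characterizing flat modules as directed colimits of projectives: weak left exactness of $M$ means $\widehat M:\md\D\to\ModR$ is exact, i.e. $M$ is a flat object in the functor category, and one invokes (or reproves in this several-objects setting) that flat = directed colimit of representables. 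The existence of weak cokernels is what guarantees $\md(\D\op)$ behaves well enough — or rather, it ensures the relevant finiteness conditions so that the Lazard argument can be run; I would cite the analogue of this for rings with several objects if available, otherwise spell out the filtered-colimit bookkeeping.

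For part b), suppose $M$ is weak left exact and finitely presented. Finitely presented gives an exact sequence $\D(-,C)\to\D(-,D)\to M\to 0$, so $M\in\md\D$. By part a), $M$ is flat, hence $M\cong\Colim_{i\in\I}\D(-,D_i)$ with $\I$ directed. Now apply the isomorphism recorded in the excerpt: for $N$ finitely presented and $\I$ directed, $\Hom_\D(N,M)\cong\colim_{i\in\I}\Hom_\D(N,M_i)$. Taking $N=M$ itself, the identity $\mathrm{id}_M\in\Hom_\D(M,M)\cong\colim_{i}\Hom_\D(M,\D(-,D_i))$ must come from some stage, i.e. there is an index $i$ and a natural transformation $M\to\D(-,D_i)$ whose composite with the structure map $\D(-,D_i)\to M$ is (homotopic in the colimit to) the identity — more precisely, after passing to a further stage the composite equals $\mathrm{id}_M$. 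This exhibits $M$ as a retract, hence a direct summand, of the representable functor $\D(-,D_i)$.

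The main obstacle I anticipate is the converse direction of part a): going from weak left exactness to an honest presentation of $M$ as a \emph{directed} colimit of representables. The canonical colimit over $\D/M$ is essentially free but not directed, and massaging it into a directed system — the "several-objects Lazard theorem" — is the substantive point where weak kernels and weak cokernels both get used (weak kernels to make $\md\D$ abelian so that "flat object of the functor category" makes sense, weak cokernels on the $\D\op$ side to control the finiteness needed when amalgamating compatible families of representables). Everything after that (part b) and the easy direction of a)) is formal manipulation of the colimit isomorphisms already set up in the excerpt.
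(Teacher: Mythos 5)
The genuine gap is in the converse direction of a), which is the heart of the proposition. You correctly identify that the canonical presentation $M\cong\Colim_{(D,x)\in\D/M}\D(-,D)$ is the right starting point and that the problem is directedness of $\D/M$, but you then defer to an unspecified ``several-objects Lazard theorem'' or ``filtered-colimit bookkeeping'' without carrying it out --- and that deferred step is exactly the content of the statement. The paper's proof is a short direct verification that $\D/M$ is directed: finite coproducts exist because $\D$ has finite biproducts and $M$ is additive (the object $(C\oplus D,(y,x))$ receives both $(C,y)$ and $(D,x)$), and a parallel pair $f,g\colon(C,y)\to(D,x)$ is coequalized by choosing a weak cokernel $h\colon D\to E$ of $f-g$ and applying weak left exactness of $M$ to the weak cokernel sequence $C\to D\to E$: since $M(f-g)(x)=y-y=0$, exactness of $M(E)\to M(D)\to M(C)$ lifts $x$ to some $z\in M(E)$ with $M(h)(z)=x$, so $h\colon(D,x)\to(E,z)$ coequalizes $f,g$ in $\D/M$. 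No flatness theory in $\md\D$ is invoked, and in fact weak kernels play no role in part a) at all; your remark that weak kernels are needed ``so that flat object of the functor category makes sense'' points at machinery the argument does not use. Without the coequalizer step, your proof of a) --- and hence of b), which depends on it --- is incomplete.

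There is also a variance slip in your check that representables are weak left exact. For a contravariant $M$, weak left exactness here means that every weak \emph{cokernel} sequence $X\to Y\to Z$ in $\D$ goes to an exact sequence $M(Z)\to M(Y)\to M(X)$ (one views $M$ as a covariant functor on $\D\op$, and weak kernel sequences in $\D\op$ are weak cokernel sequences in $\D$); this is the notion the paper's proof uses. The sequence you wrote, $\D(D_i,X)\to\D(D_i,Y)\to\D(D_i,Z)$, is the covariant functor $\D(D_i,-)$ applied to a weak kernel sequence and is not $\D(-,D_i)$ applied to anything; the correct justification is simply that $\D(Z,D_i)\to\D(Y,D_i)\to\D(X,D_i)$ is exact by the very definition of weak cokernel, after which exactness of directed colimits in $\ModR$ transfers the property to $M$. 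Your part b) --- the retract argument via $\Hom_\D(M,M)\cong\colim_{i\in\I}\Hom_\D(M,\D(-,D_i))$ for finitely presented $M$ --- coincides with the paper's proof and is fine once a) is actually established.
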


\begin{proof}
a). If M is flat, then $M\cong\Colim_{i\in\I}\D(-,D_i)$, with $D_i\in\D$,  where $\I$ is a directed category. If $X\to Y\to Z$ is a cokernel sequence, then
\[\D(Z,D_i)\to\D(Y,D_i)\to\D(X,D_i)\] is exact by the definition of the weak cokernel, and since directed colimits are exact in $\ModR$ we get the weak left exactness of $M$. Conversely, we noted before that  \[M\cong\Colim_{(D,x)\in\D/M}\D(-,D).\]  We only have to show that if $M$ is weak left exact, then $\D/M$ is directed. It is enough to show that it has coproducts and coequalizers. The existence of coproducts is obvious. Furthermore, if $f,g:(C,y)\to(D,x)$ are two parallel maps in $\D/M$, that is, $f,g:C\to D$ are genuine maps in $\D$ such that $M(f)(x)=y=M(g)(x)$, then choosing a weak cokernel for $f-g$ we get a weak cokernel sequence $C\stackrel{f-g}\la D\stackrel{h}\la E$. The exactness of the sequence $M(E)\to M(D)\to M(C)$ gives us an element $z\in M(E)$ such that $M(h)(z)=x$, which shows that $\D/M$ has coequalizers, proving our claim. 

b) Since $M$ is weak left exact, it is flat as we have seen above, therefore $M\cong\Colim_{i\in\I}\D(-,D_i)$, with $D_i\in\D$,  where $\I$ is a directed category. If, in addition, $M$ is finitely presented, then 
\[\Hom_\D(M,M)\cong\colim_{i\in\I}\Hom_\D(M,\D(-,D_i)).\] This implies that $1_M$ is the image of some map in $\Hom_\D(M,\D(-,D_i))$, that is, it factors as $M\to\D(-,D_i)\to M$, for some $i\in\I$. This means $M$ is a direct summand of a representable functor.

\end{proof}

 In order to formulate the next result, we need to introduce some terminology. If $\CS$ is a (full) subcategory of $\D$ and recall that an {\em $\CS$-precover} of $D\in\D$ is a morphism $S\to D$ such that the induced map $\D(X,S)\to\D(X,D)$ is surjective for all $X\in\CS$. The subcategory $\CS$ is called {\em precovering}, provided that every $D\in\D$ has an $\CS$-precover.

\begin{cor}\label{c:tri+precover} Assume $\D$ is a karoubian additive $R$-category with weak kernels and weak cokernels. Let  $\CS\subseteq\D$ be a full subcategory which is closed under weak cokernels and direct summands. If $\CS$ is precovering, then $\CS$ is coreflective.
\end{cor}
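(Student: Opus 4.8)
The plan is to prove, for each $D\in\D$, that the contravariant functor $M:=\D(-,D)|_\CS:\CS\to\ModR$ is representable over $\CS$. The representing object, equipped with the morphism to $D$ that the Yoneda lemma attaches to the representing isomorphism, will then be a terminal object of the comma category $\CS\downarrow D$, and letting $D$ vary this yields a right adjoint of the inclusion $\CS\hookrightarrow\D$, i.e. shows that $\CS$ is coreflective. As a preliminary, I would observe that $\CS$, being a full subcategory of the karoubian additive $R$-category $\D$ closed under direct summands and under weak cokernels, is itself a karoubian additive $R$-category with weak cokernels: a weak cokernel of a map of $\CS$ formed in $\D$ lies in $\CS$ and, by fullness, is a weak cokernel in $\CS$, and since any two weak cokernels of a given map factor through each other it is immaterial which one is chosen. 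The same remark shows that $M$ is weak left exact over $\CS$: a weak cokernel sequence $X\to Y\to Z$ in $\CS$ is, up to that equivalence, one in $\D$, and then $\D(Z,D)\to\D(Y,D)\to\D(X,D)$ is exact by the very definition of a weak cokernel.

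The heart of the argument is that $M$ is finitely presented as a $\CS$-module. Finite generation is precisely the precovering hypothesis: an $\CS$-precover $\alpha_0:S_0\to D$ gives a natural transformation $\CS(-,S_0)\xrightarrow{\alpha_{0*}}M$ whose components $\D(X,S_0)\to\D(X,D)$ are surjective for $X\in\CS$. For the relation module I would exploit that the \emph{ambient} category $\D$ has weak kernels, even though $\CS$ need not: complete $\alpha_0$ to a weak kernel sequence $W\xrightarrow{w}S_0\xrightarrow{\alpha_0}D$ in $\D$, so that for every $X\in\CS$ the sequence $\D(X,W)\to\D(X,S_0)\to\D(X,D)$ is exact, whence $\Ker\alpha_{0*}^X=\Img w_*^X$. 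Now take an $\CS$-precover $p:S_1\to W$ and put $\sigma:=wp:S_1\to S_0$; then $\alpha_0\sigma=0$, and because $p_*^X:\D(X,S_1)\to\D(X,W)$ is surjective for $X\in\CS$ one gets $\Img\sigma_*^X=\Img w_*^X=\Ker\alpha_{0*}^X$. Hence $\CS(-,S_1)\xrightarrow{\sigma_*}\CS(-,S_0)\xrightarrow{\alpha_{0*}}M\to0$ is exact and $M$ is finitely presented.

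Finally $M$ is a weak left exact, finitely presented $\CS$-module over the karoubian additive $R$-category $\CS$ with weak cokernels. Inspecting the proof of Proposition \ref{p:wlf} b) — and of the implication ``weak left exact $\Rightarrow$ flat'' of part a) that it invokes — one checks that only the existence of weak cokernels is used (weak cokernels supply the coequalizers that make $\CS/M$ directed, hence $M$ flat; finite presentation then forces $1_M$ to factor through one of the representable functors occurring in the defining colimit $M\cong\Colim_{i\in\I}\CS(-,S_i)$). So $M$ is a direct summand of a representable $\CS$-module, and as $\CS$ is karoubian, $M$ is itself representable, say $M\cong\CS(-,S_D)$. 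The Yoneda lemma turns this isomorphism into a morphism $\eta_D:S_D\to D$ for which $\D(X,S_D)\to\D(X,D)$, $g\mapsto\eta_D g$, is bijective for all $X\in\CS$, which is exactly terminality of $(S_D,\eta_D)$ in $\CS\downarrow D$, as desired.

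I expect the only genuine obstacle to be the finite-presentation step, and within it the bookkeeping of which ingredient lives where: weak kernels must be computed in $\D$ (they may be absent in $\CS$), precovers are what carry us back into $\CS$, and closure under weak cokernels is precisely what keeps the representability machinery of Proposition \ref{p:wlf} available over $\CS$. A possible minor subtlety worth flagging is the tacit use, in making $\CS/M$ directed, that $\CS$ is an additive subcategory (closed under finite biproducts), which I would either take as part of the standing conventions or note as an immediate consequence of the hypotheses.
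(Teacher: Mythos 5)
Your proposal is correct and follows essentially the same route as the paper: finite presentation of $\D(-,D)|_{\CS}$ via an $\CS$-precover of $D$, a weak kernel taken in $\D$, and a second $\CS$-precover, then weak left exactness and Proposition \ref{p:wlf} b) plus karoubianness to get representability, hence the coreflection. The only (harmless) divergence is at the point of invoking Proposition \ref{p:wlf}: you justify it by observing its proof uses only weak cokernels, whereas the paper instead notes that $\CS$ also acquires weak kernels (take a weak kernel in $\D$ and precover it), so the stated hypotheses hold verbatim; your remark about $\CS$ being additive is a tacit convention shared by the paper's own argument.
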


\begin{proof}
Let $D\in\D$. Consider an $\CS$-precover $S\to D$ and complete it to a weak kernel sequence $C\to S\to D$. Consider further an $\CS$-precover $T\to C$. Together, these imply that the sequence
\[\CS(X,T)\to\CS(X,S)\to\D(X,D)\to0\] is exact for all
$X\in\CS$. This means that the cohomological functor $\D(-,D)|_{\CS}:\CS\to\ModR$ is finitely presented. By hypothesis $\CS$ is karoubian and closed under weak cokernels. That means $\CS$ itself has weak cokernels, and $\D(-,D)|_{\CS}$ sends them to exact sequences. Moreover, if $S\to T$ is a map in $\CS$, then taking first a weak kernel in $\D$, namely $K\to S\to T$ and then a $\CS$-precover $R\to D$ we get a weak kernel sequence $R\to S\to T$ in $\CS$. All the hypotheses of Proposition \ref{p:wlf} b) relative to the functor $\D(-,D)|_{\CS}$ are satisfied, so it is representable. Finding an object $S_D$ that represents it is equivalent to defining a right adjoint $D\mapsto S_D$ to the inclusion functor $\CS\to\D$.
\end{proof}

\begin{rem} Let specialize us to the when case $\D$ is a triangulated category. With this assumption the completion of a map $Y\to Z$ (respectively $X\to Y$) to a triangle $X\to Y\to Z\rightsquigarrow$ provides a weak (co)kernel, therefore a functor $F:\D\to\A$ is weak left exact if and only if it weak right exact and if and only if is homological. Moreover we know by \cite[Corollary 5.1.23]{N01}, that $\md\D$ is Frobenius. A subcategory $\CS$ is called {\em suspended} if it is closed under extensions and positive shifts. Therefore if $\CS$ is a suspended subcategory, then it is closed under weak cokernels in $\D$ and all hypotheses of Corollary \ref{c:tri+precover} are satisfied, leading  to the following generalization for \cite[Proposition 1.4]{N10}: Every suspended karoubian subcategory $\CS$ of $\D$ is coreflective. Note also that despite the fact that in \cite[Proposition 1.4]{N10}, the subcategory $\CS$ is supposed to be triangulated, therefore we may apply Corollary  \ref{c:rep-iff-fp} in order to get the same conclusion, the original proof is much closer to the first one which uses Corollary \ref{c:tri+precover}. The fact that the proof in \cite{N10} can be applied for subcategories which are only suspended (and not necessary triangulated) was also observed in \cite[Theorem 2.9]{LV20}. We can compare Corollary \ref{c:tri+precover} with the results appeared in \cite[Theorem 2.2 and Corollary 2.3]{CGR14}: The same conclusion that $\CS$ is coreflective it is proved under slightly different hypotheses. 
\end{rem}

\section{Representability for functors defined on triangulated categories}

In this section, the category $\D$ is triangulated. As we noted before, $\md\D$ is Frobenius, and a functor $F:\D\to\A$ is weak left exact if and only if it weak right exact and if and only if it is homological.

For two subcategories $\A,\B\subseteq\D$ we define $\smd\A$ as the closure of $\A$ under direct summands and
\[\A*\B=\{Y\in\D\mid\hbox{ there is a triangle }X\to Y\to Z\rightsquigarrow\hbox{ with }X\in\A, Z\in B\}.\]
For $n\geq1$ we define inductively $\A^1=\smd\A$ and $\A^{*(n+1)}=\smd(\A^{*n}*\A^1)$. Since $0\in\A^{*1}$ we deduce $\A^{*1}\subseteq\A^{*2}\subseteq\ldots$.

\begin{lemma}\label{l:HisA*n-fg}
Let $M:\D\to\ModR$ be a cohomological functor, and let $\A\subseteq\D$ be a (full) subcategory such that $\A[1]=\A$.  Suppose that $M$ is $\A$-finitely generated and, for any $D\in\D$, the kernel of every natural transformation $\D(-,D)\to M$ is also $\A$-finitely generated. Then $M$ is $\A^{*n}$-finitely generated for all $n\geq1$.
\end{lemma}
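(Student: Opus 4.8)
The plan is to induct on $n$. The base case $n=1$ is essentially the hypothesis: $M$ being $\A$-finitely generated means there is $A \in \A$ and $\alpha : \D(-,A) \to M$ with $\alpha^X$ surjective for all $X \in \A$, and since $\A^{*1} = \smd\A$, one checks that surjectivity of $\alpha^X$ extends from objects of $\A$ to their direct summands (a retract of a surjection is a surjection). So $M$ is $\A^{*1}$-finitely generated.

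For the inductive step, assume $M$ is $\A^{*n}$-finitely generated, witnessed by some $A_n \in \A^{*n}$ and $\alpha : \D(-,A_n) \to M$ with $\alpha^X$ surjective for all $X \in \A^{*n}$. Let $K = \Ker\alpha$. By hypothesis $K$ is $\A$-finitely generated, hence (by the base-case argument) $\A^{*n}$-finitely generated as well, so there is $B \in \A^{*n}$ — more precisely I would first get $B \in \A$, but $\A \subseteq \A^{*1} \subseteq \A^{*n}$ — wait, I want a witness $B$ for $K$ that I can combine with $A_n$; take $B \in \A$ and $\beta : \D(-,B) \to K$ with $\beta^X$ surjective for $X \in \A$. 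Composing $\D(-,B) \xrightarrow{\beta} K \hookrightarrow \D(-,A_n)$ and applying Yoneda, this map is $g^*$ for some $g : B \to A_n$. Complete $g$ to a triangle $B \xrightarrow{g} A_n \xrightarrow{h} A_{n+1} \rightsquigarrow$; then $A_{n+1} \in \A^{*n} * \smd\A \subseteq \A^{*(n+1)}$ — here I use $B \in \A[1] = \A$ after a shift, since the triangle gives $A_{n+1} \in \{A_n\} * \{B[1]\}$ and $B[1] \in \A$. I then claim $M$ is $\A^{*(n+1)}$-finitely generated via a suitable $\gamma : \D(-,A_{n+1}) \to M$.

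To build $\gamma$: since $M$ is cohomological and $B \xrightarrow{g} A_n \xrightarrow{h} A_{n+1}$ sits in a triangle, applying $\D(-, A_n) \to M$ and chasing the diagram, the composite $\D(-,B) \xrightarrow{g^*} \D(-,A_n) \xrightarrow{\alpha} M$ is zero (because $g^*$ lands in $K = \Ker\alpha$ by construction of $\beta$), so $\alpha$ factors through the cokernel of $g^*$; but $\D(-,B) \xrightarrow{g^*} \D(-,A_n) \xrightarrow{h^*} \D(-,A_{n+1})$ is a weak kernel sequence (the defining exact sequence from the triangle), giving an epimorphism onto $\mathrm{Im}(h^*) \cong \Coker(g^* : \D(-,B)\to \D(-,A_n))$ — more carefully, I use that $\D(-,A_{n+1}) \to \mathrm{something}$; the cleanest route is: the long exact sequence of the triangle gives $\D(-,A_n) \xrightarrow{h^*} \D(-,A_{n+1}) \xrightarrow{\partial} \D(-,B[1])$ exact, and also $\D(-,B) \xrightarrow{g^*} \D(-,A_n) \xrightarrow{h^*} \D(-,A_{n+1})$ exact, so $h^*$ induces an isomorphism $\Coker(g^*) \cong \mathrm{Im}(h^*)$. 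Then $\alpha$, killing $\mathrm{Im}(g^*)$, factors as $\D(-,A_n) \xrightarrow{h^*} \D(-,A_{n+1}) \supseteq \mathrm{Im}(h^*) \xrightarrow{\bar\alpha} M$. I do \emph{not} automatically get an extension of $\bar\alpha$ along $\mathrm{Im}(h^*) \hookrightarrow \D(-,A_{n+1})$ in general, but I only need the resulting map to be surjective on objects of $\A^{*(n+1)}$ after composing appropriately; so instead I define $\gamma$ as the composite and verify surjectivity directly: for $X \in \A^{*(n+1)}$, up to summands $X \in \A^{*n} * \A[1]$, write a triangle $X' \to X \to X'' \rightsquigarrow$ with $X' \in \A^{*n}$, $X'' \in \A[1] = \A$, and run a diagram chase combining surjectivity of $\alpha$ on $X'$ (so $M(X')$ is hit by $\D(X',A_n)$) with surjectivity of $\beta$ on $X''[-1] \in \A$ to lift the obstruction, using the five-lemma-style argument on the two long exact sequences. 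The main obstacle is exactly this last diagram chase: assembling the factorization of $\alpha$ through $h^*$ and the $\A$-finite generation of $K$ into a single map out of $\D(-,A_{n+1})$ that is surjective on all of $\A^{*(n+1)}$ — one has to be careful that "summand" closure interacts correctly with the triangle decomposition, and that the lift chosen for the kernel element genuinely produces a preimage in $\D(X, A_{n+1})$.
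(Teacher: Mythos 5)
Your overall strategy is the same as the paper's (induct on $n$, take the kernel $K$ of $\alpha_n$, use its $\A$-finite generation to produce $B\in\A$ and a map $b:B\to A_n$, complete to a triangle, and check surjectivity on $\A^{*(n+1)}$ by decomposing an object into a triangle with pieces in $\A^{*n}$ and $\A^{*1}$), but the write-up has a genuine gap at the decisive step: you never actually construct the natural transformation $\gamma:\D(-,A_{n+1})\to M$. You correctly observe that $\alpha_n$ factors through the image of $h^*$ and that there is no formal reason to extend along $\mathrm{Im}(h^*)\hookrightarrow\D(-,A_{n+1})$, but then ``define $\gamma$ as the composite'' is vacuous --- there is no composite defined on all of $\D(-,A_{n+1})$, and without a genuine map out of $\D(-,A_{n+1})$ the claim ``$M$ is $\A^{*(n+1)}$-finitely generated'' cannot even be stated. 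The point you are missing is that the extension \emph{is} available, and this is precisely where the cohomological hypothesis on $M$ is used at this stage: by Yoneda, $\alpha_n$ corresponds to an element $e\in M(A_n)$, and $\alpha_n\circ b^*=0$ translates into $M(b)(e)=0$; applying $M$ to the triangle $B\stackrel{b}\la A_n\stackrel{a}\la A_{n+1}\la B[1]$ gives an exact sequence $M(A_{n+1})\to M(A_n)\to M(B)$, so $e$ lifts to an element of $M(A_{n+1})$, i.e.\ there is $\alpha_{n+1}:\D(-,A_{n+1})\to M$ with $\alpha_{n+1}\circ a^*=\alpha_n$. No extension problem along a subfunctor inclusion arises at all.

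The second gap is the surjectivity verification, which you only gesture at and yourself flag as ``the main obstacle.'' It does not follow from a five-lemma argument alone; it is a two-stage chase. For $Y$ (up to summands) sitting in a triangle $Z[-1]\stackrel{f}\la X\stackrel{g}\la Y\stackrel{h}\la Z$ with $X\in\A^{*n}$ and $Z\in\A^{*1}$ (note $Z\in\smd\A$, not necessarily $\A$; and you need $\A[1]=\A$ to know $Z[-1]\in\A^{*1}$), take $y\in M(Y)$: lift $x=M(g)(y)$ to $u:X\to A_n$ via $\alpha_n^X$; then $uf$ lies in $K(Z[-1])$, so it lifts to $v:Z[-1]\to B$ since the presentation of $K$ is surjective on $\A^{*1}$; completing $(v,u)$ to a morphism of triangles yields $w:Y\to A_{n+1}$ with $\alpha_{n+1}^Y(w)$ and $y$ having the same image under $M(g)$. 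The step your sketch omits is the correction of the error term: $y-\alpha_{n+1}^Y(w)=M(h)(z)$ for some $z\in M(Z)$ by cohomologicity, and since $Z\in\A^{*1}\subseteq\A^{*n}$ one lifts $z$ through $\alpha_n^Z$ to $s:Z\to A_n$ and uses $\alpha_{n+1}a^*=\alpha_n$ to conclude $y=\alpha_{n+1}^Y(w+a\,s\,h)$. With the construction of $\alpha_{n+1}$ via the cohomological lifting above and this explicit correction, your outline becomes the paper's proof; as it stands, both the existence of $\gamma$ and the surjectivity claim are asserted rather than proved.
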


\begin{proof}
 We proceed by induction on $n$. For $n=1$, $M$ is $\A$-finitely generated by hypothesis, so it is also $\A^{*1}$-finitely generated. Suppose that $M$ is $A^{*n}$-finitely generated for some $n\geq1$. Then there are $A_n\in\A^{*n}$ and $\alpha_n:\D(-,A_n)\to M$ such that $\alpha_n^X$ is surjective for all $X\in\A^{*n}$. Denote by $K_n$ the kernel of $\alpha_n$. By hypothesis $K_n$ is $\A$-finitely generated, so there is $B_n\in\A$ and a natural transformation $\D(-,B_n)\to K_n$ such that $\D(X,B_n)\to K_n(X)\to 0$ is exact for all $X\in\A^{*1}$. By the Yoneda lemma, the composite $\D(-,B_n)\to K_n\to\D(-,A_n)$ is of the form $b_n^*=\D(-,b_n)$ for some (unique) $b_n:B_n\to A_n$ in $\D$. Complete it to a triangle \[B_n\stackrel{b_n}\la A_n\stackrel{a_n}\la A_{n+1}\stackrel{c_n}\la B_n[1].\] By construction $\alpha_nb_n^*=0$, which means that $M(b_n)$ maps to $0$ the element in $M(A_n)$ corresponding under Yoneda isomorphism to $\alpha_n$. Since $M$ is cohomological, the sequence \[M(A_{n+1})\stackrel{M(a_n)}\la M(A_n)\stackrel{M(b_n)}\la M(B_n)\] is exact, therefore, this element in $M(A_n)$ lies in the image of $M(a_n)$. Applying the Yoneda lemma again, this means that there is a natural transformation $\alpha_{n+1}:\D(-,A_{n+1})\to M$ such that $\alpha_{n+1}a_n^*=\alpha_n$. It is clear that $A_{n+1}\in\A^{*(n+1)}$ since $B_n[1]\in\A[1]=\A$. In order to finish our proof, we only have to show that $\alpha_{n+1}^Y:\D(Y,A_{n+1})\to M(Y)$ is surjective for all $Y\in\A^{*(n+1)}$. Since this property is obviously inherited by direct summands, it is enough to prove this claim for the objects $Y$ for which there is a triangle \[Z[-1]\stackrel{f}\la X\stackrel{g}\la Y\stackrel{h}\la Z\] with $X\in\A^{*n}$ and $Z\in\A^{*1}$.

Let $y\in M(Y)$, and put $x=M(g)(y)\in M(X)$. Since $X\in\A^{*n}$, the map $\alpha_n^X$ is surjective, so there is $u:X\to A_n$ such that $\alpha_n^X(u)=x$. Further $M(f)\alpha_n^X(u)=M(f)(x)=M(f)M(g)(y)=0$, and by the naturality of $\alpha_n$ this is equivalent to $\alpha_n^{Z[-1]}f_{A_n}(u)=0$, as we can see from the commutative diagram:
\[\diagram
    \D(Z[-1],A_n)\dto_{\alpha_n^{Z[-1]}} &\D(X,A_n)\dto^{\alpha_n^X}\lto_{f_{A_n}}\\
    M(Z[-1]) & M(X)\lto^{M(f)}
\enddiagram\] where $f_*=\D(f,-)$. Thus $uf=f_{A_n}(u)$ belongs to the kernel $K_n(Z[-1])$ of $\alpha_n^{Z[-1]}$. But $Z[-1]\in\A^{*1}[-1]=\A^{*1}$, therefore the sequence $\D(Z[-1],B_n)\to K_n(Z[-1])\to 0$ is exact, what means that there is $v:Z[-1]\to B_n$ such that $uf=b^{Z[-1]}(v)=b_nv$. We get a morphism $w:Y\to A_{n+1}$ completing commutative to a morphism of triangles the diagram:
\[\diagram Z[-1]\rto^{f}\dto^{v}&X\rto^{g}\dto^{u}&Y\rto^{h}\dto^{w}&Z\dto^{v[1]} \\
B_n\rto_{b_n}&A_n\rto_{a_n}&A_{n+1}\rto_{c_n}&B_n[1]
\enddiagram.\]

Putting together all equalities before we get: \[M(g)(y)=x=\alpha_n^X(u)=\alpha_{n+1}^Xa_n^X(u)=\alpha_{n+1}^Xg_{A_{n+1}}(w).\] Using the naturality of $\alpha_{n+1}$ we continue:
\[M(g)(y)=M(g)\alpha_{n+1}^Y(w),\]
therefore $M(g)(y-\alpha_{n+1}^Y(w))=0$.
Furthermore, we know that the sequence \[M(Z)\stackrel{M(h)}\la M(Y)\stackrel{M(g)}\la M(X)\] is exact, so there is $z\in M(Z)$ such that $M(h)(z)=y-\alpha_{n+1}^Y(w)$. But $Z\in\A\subseteq\A^{*n}$, therefore $\alpha_n^Z:\D(Z,A_n)\to M(Z)$ is surjective, therefore we get a map $s:Z\to A_n$ such that $\alpha_n^Z(s)=z$. Using the naturality of $\alpha_{n+1}$ again we get
\[M(h)(z)=M(h)\alpha_n^Z(s)=M(h)\alpha_{n+1}^Za_n^Z(s)=\alpha_{n+1}^Yh_{A_{n+1}}a_n^Z(s)=\alpha_{n+1}^Y(a_nsh).\] Finally we get \[y=\alpha_{n+1}^Y(a_nsh+w),\] which proves our claim.
\end{proof}

\begin{thm}\label{thm:main}
Let $M:\D\to\ModR$ be a cohomological functor, and let $\A\subseteq\D$ be a (full) subcategory such that $\A[1]=\A$, $\A$ is precovering, and $\A^{*n}=\D$ for some $n\geq1$.  Suppose that $M$ is $\A$-finitely generated and, for any $D\in\D$, the kernel of every natural transformation $\D(-,D)\to M$ is also $\A$-finitely generated. Then $M$ is isomorphic to a direct summand of a representable functor. In particular, if $\D$ is karoubian, then $M$ is representable.
\end{thm}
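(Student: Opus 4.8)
The plan is to combine Lemma~\ref{l:HisA*n-fg} with Corollary~\ref{c:rep-iff-fp}. First I would invoke Lemma~\ref{l:HisA*n-fg} with the given subcategory $\A$: since $\A[1]=\A$, $M$ is $\A$-finitely generated, and the kernel of every natural transformation $\D(-,D)\to M$ is $\A$-finitely generated, the lemma yields that $M$ is $\A^{*n}$-finitely generated for all $n\geq 1$. Taking the particular $n$ for which $\A^{*n}=\D$, this says precisely that $M$ is $\D$-finitely generated, i.e.\ finitely generated in the usual sense: there is a natural epimorphism $\D(-,A)\twoheadrightarrow M$ for some $A\in\D$.

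Next I would upgrade ``finitely generated'' to ``finitely presented''. The cleanest route is to show that the hypothesis on kernels, together with the conclusion of the lemma, forces every kernel of a natural epimorphism $\D(-,D)\to M$ to be finitely generated (not merely $\A$-finitely generated). Indeed, given such an epimorphism with kernel $K$, the hypothesis gives that $K$ is $\A$-finitely generated; but $K$ is itself the kernel of a map into $M$ hooked into a representable functor, so one can iterate: $K$ is $\A$-finitely generated, hence by applying Lemma~\ref{l:HisA*n-fg}-style reasoning to $K$ — or more directly, by running the inductive argument of that lemma one extra notch, as is done inside its proof when passing from $A_n$ to $A_{n+1}$ via the triangle $B_n\to A_n\to A_{n+1}$ — one gets that $K$ is $\A^{*n}$-finitely generated, hence $\D$-finitely generated. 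Thus $M$ is finitely presented, i.e.\ $M\in\md\D$.

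The last step is purely formal. Since $\D$ is triangulated, a cohomological functor $M:\D\to\ModR$ is homological in the sense of the excerpt, hence both weak left exact and weak right exact; moreover $\md\D$ is Frobenius by \cite[Corollary 5.1.23]{N01} (as recorded in the remark following Corollary~\ref{c:tri+precover}), and $\D$ has weak kernels because completing a morphism to a triangle produces one. So all hypotheses of Corollary~\ref{c:rep-iff-fp} are met for the now-finitely-presented, weak right exact functor $M$, and we conclude that $M$ is a direct summand of a representable functor; if in addition $\D$ is karoubian, that direct summand is itself representable.

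The main obstacle is the second step: bootstrapping from $\A$-finite generation of kernels to genuine finite presentation of $M$. One has to be careful that the ``kernel of every natural transformation $\D(-,D)\to M$'' hypothesis is applied to the right maps and that the inductive mechanism of Lemma~\ref{l:HisA*n-fg} can indeed be re-run for the kernel functor $K$ (which is no longer assumed cohomological, but is a subfunctor of a representable functor, so Proposition~\ref{p:hatH-rep}~b) applies and shows finite generation of $K$ suffices for its finite presentation). Once that careful chaining is in place, the rest is an immediate appeal to Corollary~\ref{c:rep-iff-fp}.
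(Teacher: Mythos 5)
Your first and last steps coincide with the paper's proof: Lemma \ref{l:HisA*n-fg} applied to $M$ yields an epimorphism $\alpha:\D(-,D)\to M$, and once $M$ is known to be finitely presented, Corollary \ref{c:rep-iff-fp} finishes. The genuine gap is in your middle step, where you want to show that the kernel $N=\Ker\alpha$ is finitely generated by ``re-running'' Lemma \ref{l:HisA*n-fg} for $N$. To apply that lemma to $N$ you must verify two of its hypotheses, and you verify neither. First, the lemma requires $N$ to be cohomological (its proof uses the exactness of $N(A_{n+1})\to N(A_n)\to N(B_n)$ and of $N(Z)\to N(Y)\to N(X)$); you explicitly note that the kernel ``is no longer assumed cohomological'' and offer Proposition \ref{p:hatH-rep} b) as a substitute, but that proposition only relates finite generation to finite presentation and says nothing about cohomologicality -- nor is finite presentation of $N$ what is needed, since the point is to obtain finite generation of $N$ in the first place. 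In fact $N$ \emph{is} cohomological, being the kernel of an epimorphism from the cohomological functor $\D(-,D)$ onto the cohomological functor $M$ (a diagram chase with the long exact sequences), but this must be argued and does not follow from Proposition \ref{p:hatH-rep}. Second, and more seriously, the lemma requires that for every $C\in\D$ the kernel of \emph{every} natural transformation $\D(-,C)\to N$ be $\A$-finitely generated; the theorem's hypothesis only provides this for maps into $M$, not into $N$, and your sketch never addresses it. This is exactly where the paper uses Proposition \ref{p:hatH-rep} b) (since $N$ is a subfunctor of the representable $\D(-,D)$, any such kernel $K$ admits an epimorphism $\D(-,B)\to K$) together with the precovering hypothesis on $\A$ (precompose with $\D(-,A)\to\D(-,B)$ for an $\A$-precover $A\to B$ to upgrade plain finite generation to $\A$-finite generation). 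Tellingly, your argument never invokes the assumption that $\A$ is precovering, which is a stated hypothesis of the theorem and is indispensable at precisely this point.

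Your alternative suggestion -- running the induction of Lemma \ref{l:HisA*n-fg} ``one extra notch,'' as in the passage from $A_n$ to $A_{n+1}$ -- does not repair this: that induction improves the approximation of $M$ itself and at each stage consumes the hypothesis that kernels of maps into $M$ are $\A$-finitely generated; it does not yield finite generation of the kernel of the resulting epimorphism. Once you (i) prove that $N$ is cohomological and (ii) establish $\A$-finite generation of kernels of maps into $N$ via Proposition \ref{p:hatH-rep} b) combined with $\A$-precovers, the remainder of your outline is exactly the paper's argument.
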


\begin{proof}
 Let $n\geq 1$ for which $\A^{*n}=\D$. We know by Lemma \ref{l:HisA*n-fg} that $M$ is finitely generated, that is, there is $D\in\D$ and an epimorphism $\alpha:\D(-,D)\to M$. Denote by $N$ the kernel of $\alpha$. Then $N$ is cohomological and, by the hypothesis, $\A$-finitely generated. Therefore, there is a natural transformation $\beta:\D(-,C)\to N$, such that $\beta^X$ is surjective for every $X\in\A$. Observe that $N$ is a subfunctor of a representable functor, allowing us to apply Proposition \ref{p:hatH-rep} b). Therefore, the kernel $K$ of $\beta$ is finitely generated, that is, there is a natural epimorphism $\D(-,B)\to K$. Precomposing it with the map $\D(-,A)\to\D(-,B)$, where $A\to B$ is an $\A$-precover, we deduce that $K$ is also $\A$-finitely generated. This allows us to apply Lemma \ref{l:HisA*n-fg} again, to show that $N$ is finitely generated. Together, this means $M$ is finitely presented, and the conclusion follows by Corollary \ref{c:rep-iff-fp}.
\end{proof}

Observe that the shift functor on $\D$ induces an equivalence at the level of the category of $\D$-modules. To be precise, let $M:\D\to\ModR$ be a contravariant functor, that is, a $\D$-module. Then $\Sigma M:\D\to\ModR$ is defined by $\Sigma M(X)=M(X[-1])$. Put $M^*=\bigoplus_{n\in\Z}\Sigma^nM$. 
Fix now an object $G\in\D$. It is clear that $M*(G)$ is an $R$-module. Moreover, denote by $\End^*(G)=\bigoplus_{n\in\Z}\D(G[-n],G)$ the graded endomorphism ring of $G$. It follows immediately that $M^*(G)=\bigoplus_{n\in\Z}M(G[-n])$ becomes a graded left $\End^*(G)$-module. 

Recall that a {\em thick} subcategory of $\D$ is a triangulated subcategory which is closed under direct summands.
Consider a set of objects $\G\in\D$. We denote $\G[\Z]=\{G[n]\mid G\in\G, n\in\Z\}$. Starting with $\G$  we construct inductively $\thick_n(\G)$ as follows:
$\thick_1(\G)=\add(\G[\Z])$, and \begin{align*}
    \thick_{n+1}(\G)=\add(&\{Y\in\D\mid\hbox{there is a triangle }X\to Y\to Z\rightsquigarrow\\ &\hbox{ with }X\in\thick_n(\G)\hbox{ and } Z\in\thick_1(\G)\}).
\end{align*}
It is clear that $\thick(\G)=\bigcup_{n\geq1}\thick_n(\G)$ is the smallest thick subcategory of $\D$ containing $\G$.

A case of particular importance is when $\G$ is a singleton, that is, $\G=\{G\}$, with $G\in\D$, when we simply write $G[\Z]$ instead of $\{G\}[\Z]$. The object $G$ is called a {\em classical generator}, respectively, a {\em strong generator} for $\D$ provided that $\D=\thick(G)$, respectively $\D=\thick_{n}(G)$ for some $n\geq1$. A functor $M:\D\to\ModR$ is called $G$-finite if the $R$-module $M^*(G)$ is finitely generated (in particular, this implies $M(G[n])=0$ for $|n|\gg0$). 

The following gives a new proof for \cite[Theorem 4.2]{N18} or \cite[Corollary 4.18]{R08}:

\begin{cor}\label{c:rep-4-small} Assume $R$ is noetherian and let $G$ be a strong generator for $\D$, such that the functor $\D(-,G)$ is $G$-finite.  Then  a contravariant functor $M:\D\to\ModR$ is isomorphic to a direct summand of a representable functor \iff  it is cohomological and $G$-finite. If further $\D$ is karoubian, then $M$ is representable \iff it is cohomological and $G$-finite. 
\end{cor}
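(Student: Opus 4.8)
I want to deduce Corollary \ref{c:rep-4-small} from Theorem \ref{thm:main} by taking $\A=\add(G[\Z])=\thick_1(G)$. First I would check that $\A$ satisfies the structural hypotheses of the theorem: clearly $\A[1]=\A$, and since $G$ is a strong generator we have $\thick_n(G)=\D$ for some $n$; one then checks by induction that $\thick_n(G)\subseteq\A^{*n}$ (the definition of $\thick_{n+1}$ closes under summands, shifts, and triangles with one factor in $\thick_1(G)=\A^1$ and the other in $\thick_n(G)$, exactly the operation defining $\A^{*(n+1)}$), so $\A^{*n}=\D$. It remains to verify that $\A$ is precovering.

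\textbf{Precovering.} The key observation is that for any $D\in\D$, the $R$-module $\D(G[m],D)$ is a submodule of nothing a priori, but we are only claiming an $\A$-precover, i.e. a map $S\to D$ with $S\in\add(G[\Z])$ inducing a surjection $\D(G[m],S)\to\D(G[m],D)$ for every $m$. Since $R$ is noetherian and — I would need — each $\D(G[m],D)$ is finitely generated, one can pick finitely many generators of $\D(G[m],D)$ over $R$, which gives a map $G[m]^{k_m}\to D$ hitting all of them; then $\bigoplus_m G[m]^{k_m}\to D$ is the desired precover, provided only finitely many $m$ contribute. Here is where $G$-finiteness of $\D(-,G)$ enters: I expect it forces $\D(G[m],D)=0$ for $|m|\gg 0$ whenever $D$ lies in $\thick_N(G)$ for fixed $N$, by an induction on $N$ using the long exact sequences of the triangles (the bound on the vanishing range grows with $N$ but stays finite), and simultaneously forces $\D(G[m],D)$ to be finitely generated over $R$ for every $m$. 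Granting this, $\A$ is precovering.

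\textbf{Applying Theorem \ref{thm:main}.} Now let $M:\D\to\ModR$ be cohomological and $G$-finite, so $M^*(G)=\bigoplus_n M(G[-n])$ is a finitely generated $R$-module, hence $M(G[-n])=0$ for $|n|\gg 0$ and each $M(G[-n])$ is finitely generated (as $R$ is noetherian). Finitely many $R$-generators of the nonzero $M(G[-n])$ assemble, via Yoneda, into a natural transformation $\alpha:\D(-,A)\to M$ with $A=\bigoplus_n G[-n]^{k_n}\in\A$ such that $\alpha^{G[m]}$ is surjective for all $m$, and hence $\alpha^X$ is surjective for all $X\in\A=\add(G[\Z])$; so $M$ is $\A$-finitely generated. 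For the kernel condition: given any $\gamma:\D(-,D)\to M$ with kernel $N$, I must show $N$ is $\A$-finitely generated, i.e. $N|_\A$ is finitely generated. Since $N(G[m])$ sits in the exact sequence $\D(G[m-1],D)\to M(G[m-1])\to N(G[m])\to\D(G[m],D)$ hold on — more simply, $N(G[m])\hookrightarrow \D(G[m],D)$ is the kernel of $\gamma^{G[m]}:\D(G[m],D)\to M(G[m])$, hence a submodule of the finitely generated (by the induction above, since $D\in\thick_N(G)=\D$) $R$-module $\D(G[m],D)$, so it is finitely generated as $R$ is noetherian; and it vanishes for $|m|\gg 0$ since both $\D(G[m],D)$ and $M(G[m])$ do. Thus $N^*(G)$ is finitely generated over $R$, and again finitely many generators give an $\A$-precover-type map $\D(-,B)\to N$ with $B\in\A$ surjective on all $G[m]$, i.e. $N$ is $\A$-finitely generated. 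Theorem \ref{thm:main} then yields that $M$ is a direct summand of a representable functor, and representable if $\D$ is karoubian. The converse is immediate: a representable functor $\D(-,X)$ is cohomological, and $\D(-,X)^*(G)=\bigoplus_n\D(G[-n],X)$ is finitely generated over $R$ by the same vanishing-and-finiteness argument applied to $X\in\D$, so any direct summand of it is cohomological and $G$-finite.

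\textbf{Main obstacle.} The crux is the claim that $\D(G[m],D)$ is a finitely generated $R$-module for every $D\in\D$ and vanishes for $|m|$ large (with a bound depending on the $\thick_N$-level of $D$). This is the induction that propagates $G$-finiteness of $\D(-,G)$ along the finitely many triangle-steps building $\D$ from $G$; getting the bookkeeping of the vanishing range right — and making sure finite generation is preserved under the summand closure in each $\thick_{n+1}$ step — is the technical heart. Everything else is a matter of translating "finitely generated $R$-module" into "$\A$-finitely generated $\D$-module" via Yoneda and feeding the result into Theorem \ref{thm:main}.
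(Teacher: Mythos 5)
Your proposal is correct and follows essentially the same route as the paper: take $\A=\thick_1(G)$, note $\A^{*n}=\D$ by strong generation, use noetherianity of $R$ plus $G$-finiteness to get $\A$-finite generation of $M$, of each $\D(-,D)$ (hence the precovering property), and of kernels, and then invoke Theorem \ref{thm:main}. The propagation step you flag as the "main obstacle" is exactly what the paper dispatches with "it follows immediately that $\D(-,D)$ is $G$-finite for all $D\in\thick(G)$," and your sketched induction along triangles, shifts, finite sums and summands (using that submodules of finitely generated modules over the noetherian $R$ are finitely generated) is the intended argument.
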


\begin{proof}
   It is clear that $\A=\thick_1(G)$ is stable under shifts and, by hypothesis, $\A^{*n}=\D$ for some $n\geq1$. The argument used in \cite[Lemma 4.1]{N18} to show that $M$ is $\A$-finitely generated (the base case of the induction there) goes as follows: Let $m\geq0$ be such that $M(G[n])=0$, for $|n|>m$. Let $\{a_{n,i}\mid 1\leq i\leq k_n\}$ be a (finite) set of generators for $M(G[n])$ for $|n|\leq m$. By Yoneda we get a set of natural transformations $\alpha_{n,i}:\D(-,G[n])\to M$, with $1\leq i\leq k_n$. For $A=\bigoplus_{|n|\leq m}\bigoplus_{i=1}^{k_n}G[n]$, we get a natural transformation $\alpha:\D(-,A)\to M$ which is an epimorphism when it is restricted to $\A$. Since the functor $\D(-,G)$ is $G$-finite, it follows immediately that $\D(-,D)$ is $G$-finite cohomological functor for all $D\in\thick(G)=\D$. The argument used above for $M$, applied this time for $\D(-,D)$, with $D\in\D$ arbitrary chosen, shows that $\A$ is precovering. If $K$ is the kernel of a natural transformation $\D(-,D)\to M$, then the same argument works again to show that $K$ is $\A$-finitely generated. Therefore, Theorem \ref{thm:main} provides the conclusion. 
\end{proof}

The following Corollary is a variant of \cite[Theorem 4.20]{R08}.
One advantage of the present proof is that it can be directly dualized in order to find a criterion for Brown representability for the dual. Actually, the next Corollary is exactly the dual of \cite[Theorem 1.11]{N09} (see also \cite[Remark 1.12]{N09}), but the proofs are different.   

\begin{cor}\label{c:rep-4-big}
    Assume $\D$ has coproducts and $M:\D\to\ModR$ is a contravariant functor. Suppose that there is a set $\G$ of objects in $\D$ such that $\D=\A^{*n}$, where $\A=\Add\G[\Z]$. Then $M$ is representable \iff $M$ is cohomological and sends coproducts into products.  
\end{cor}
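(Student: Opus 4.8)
The plan is to realize Corollary~\ref{c:rep-4-big} as an instance of Theorem~\ref{thm:main} applied to the subcategory $\A=\Add\G[\Z]$; the whole proof then reduces to checking the hypotheses of that theorem in the present ``big'' setting. The ``only if'' direction is formal: a representable functor $\D(-,X)$ is cohomological because $\D$ is triangulated, and it turns coproducts into products by the universal property of the coproduct. So from now on I would assume $M$ is cohomological and sends coproducts to products.

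First I would dispose of the formal hypotheses. Since $\G[\Z]$ is shift-stable and $\Add(-)$ commutes with shifting, $\A[1]=\A$; since $\A=\Add\G[\Z]$ is closed under direct summands, $\A^{*1}=\A$, so the assumed equality $\D=\A^{*n}$ is literally the hypothesis of Theorem~\ref{thm:main}. For the precovering property, observe that $\G$ is a set, hence $\G[\Z]$ is (essentially) a set and each $\D(G[n],D)$ is a set; for $D\in\D$ the morphism $s:\coprod_{(G,n,f)}G[n]\to D$, the coproduct taken over all triples $(G,n,f)$ with $G\in\G$, $n\in\Z$, $f\in\D(G[n],D)$ and with $s$ acting by $f$ on the $(G,n,f)$-th summand, has source in $\A$ and is an $\A$-precover: it is visibly surjective on $\D(G'[m],-)$ for every $G'\in\G$, $m\in\Z$, and hence on $\D(X,-)$ for every $X\in\A$, because $\D(-,-)$ sends coproducts in the first variable to products and surjectivity is inherited by direct summands.

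The substantive step is to show that $M$, and the kernel of every $\gamma:\D(-,D)\to M$, is $\A$-finitely generated; this is the only place the hypothesis on $M$ enters. For $M$ itself, put $A=\coprod_{(G,n,x)}G[n]$, the coproduct over all triples $(G,n,x)$ with $G\in\G$, $n\in\Z$, $x\in M(G[n])$, so $A\in\A$; since $M$ sends coproducts to products we may identify $M(A)=\prod_{(G,n,x)}M(G[n])$ over the same index set, with the coproduct inclusions inducing the product projections. Let $b\in M(A)$ be the element whose $(G,n,x)$-component equals $x$, and let $\alpha:\D(-,A)\to M$ be the natural transformation corresponding to $b$ under Yoneda. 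Composing $\alpha$ with the inclusion $\iota$ of the summand indexed by $(G,n,x)$ gives $\alpha^{G[n]}(\iota)=M(\iota)(b)=x$, so $\alpha^{G[n]}$ is surjective for all $G\in\G$, $n\in\Z$; hence $\alpha^X$ is surjective for every $X\in\A$, again because $\D(-,-)$ and $M$ turn coproducts into products and surjectivity passes to summands. Thus $M$ is $\A$-finitely generated. For $K=\Ker\gamma$ with $\gamma:\D(-,D)\to M$: both $\D(-,D)$ and $M$ send coproducts to products, hence so does $K$ (a product of kernels is the kernel of the corresponding product map), and the argument just given --- which used nothing about $M$ beyond this property --- applies verbatim to $K$.

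All hypotheses of Theorem~\ref{thm:main} now hold, so $M$ is a direct summand of a representable functor; and since a triangulated category with countable coproducts is idempotent complete (B\"okstedt--Neeman), $\D$ is karoubian, whence Theorem~\ref{thm:main} gives that $M$ is representable. The one point needing care is the identification $M\bigl(\coprod_j A_j\bigr)=\prod_j M(A_j)$ with coproduct inclusions inducing product projections: this is exactly what lets the individual natural transformations $\D(-,G[n])\to M$, one for each element of $M(G[n])$, be assembled into a single map out of a representable functor, and it is the reason the coproduct-preservation hypothesis on $M$ cannot be dropped --- it plays here the role that finite biproducts played in the compactly generated case of Corollary~\ref{c:rep-4-small}.
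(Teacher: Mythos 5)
Your proposal is correct and follows essentially the same route as the paper: both reduce to Theorem~\ref{thm:main} with $\A=\Add\G[\Z]$, verifying the precovering and $\A$-finite generation hypotheses by forming the coproduct $\coprod G[n]$ indexed by all elements $x\in M(G[n])$ and using that $M$ (and likewise any kernel $K$) sends coproducts to products. Your explicit remarks on the precover construction and on karoubianness via B\"okstedt--Neeman only spell out details the paper leaves implicit.
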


\begin{proof}
    Representable functors are cohomological and send coproducts into products. Conversely, 
    $\A$ is precovering, and taking $A=\bigoplus_{G\in\G[\Z]}\left(\bigoplus_{x\in M(G)}G_x\right)$, where $G_x\cong G$, for all $x\in M(G)$, we get an epimorphism $\D(-,A)\to M$, therefore $M$ is $\A$-finitely generated. Moreover, if $K$ is the kernel of a natural transformation $\D(-,D)\to M$, then $K$ is cohomological and sends coproducts into products. By the same argument $K$ is also $\A$-finitely generated, hence Theorem \ref{thm:main} applies.  
\end{proof}

Observe that the shift functor on $\D$ induces an equivalence at the level of the category of $\D$-modules. To be precise, let $M:\D\to\ModR$ be a contravariant functor, that is, a $\D$-module. Then $\Sigma M:\D\to\ModR$ is defined by $\Sigma M(X)=M(X[-1])$. Put $M^*=\bigoplus_{n\in\Z}\Sigma^nM$.
Fix now an object $G\in\D$.
Denote by $\End^*(G)=\bigoplus_{n\in\Z}\D(G[-n],G)$ the graded endomorphism ring of $G$. It follows immediately that $M^*(G)=\bigoplus_{n\in\Z}M(G[-n])$ becomes a graded left $\End^*(G)$-module.  

\begin{lemma}\cite[Proposition 4.8]{R08}\label{l:fg-rouq}
With the notations above, we have: 
\begin{enumerate}[{\rm a)}] 
\item If $M^*(G)$ is finitely generated as $\End^*(G)$-module, then $M$ is $\thick_1(G)$-finitely generated.
\item If $M$ is $\thick(G)$-finitely generated, then $M^*(G)$ is a finitely generated $\End^*(G)$-module. 
\end{enumerate} 
\end{lemma}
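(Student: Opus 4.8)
The plan is to prove the two implications of Lemma~\ref{l:fg-rouq} separately, in both cases exploiting the explicit description of $\thick_1(G)=\add(G[\Z])$ together with the additivity of $M$, and keeping careful track of the shift-bookkeeping encoded in the graded module $M^*(G)$ over $\End^*(G)$.

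For part (a), suppose $M^*(G)$ is generated as an $\End^*(G)$-module by finitely many homogeneous elements; without loss of generality take them to be homogeneous, say $x_j\in M(G[-n_j])=\Sigma^{n_j}M(G)$ for $j=1,\dots,k$. By the Yoneda lemma each $x_j$ corresponds to a natural transformation $\alpha_j:\D(-,G[-n_j])\to M$. Set $A=\bigoplus_{j=1}^k G[-n_j]\in\add(G[\Z])=\thick_1(G)$ and let $\alpha=(\alpha_j)_j:\D(-,A)\to M$ be the induced map. I would then check that $\alpha^X$ is surjective for every $X\in\thick_1(G)$: since surjectivity on each object is inherited by finite biproducts and by direct summands, it suffices to verify it for $X=G[m]$, $m\in\Z$, and there the claim is precisely the statement that the homogeneous piece $M(G[m])$ of $M^*(G)$ is hit by $\End^*(G)$-combinations of the $x_j$ — which is exactly the generation hypothesis, once one unwinds that the $\End^*(G)$-action sends $x_j\in M(G[-n_j])$ and a degree-$d$ element $\phi\in\D(G[-d],G)$ to $M(\phi[\text{shift}])(x_j)$, an element of $M(G[-n_j-d])$, realized by precomposition along a map $G[-n_j-d]\to G[-n_j]$. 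Thus $M$ is $\thick_1(G)$-finitely generated.

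For part (b), suppose $M$ is $\thick(G)$-finitely generated, so there is $D\in\thick(G)$ and $\alpha:\D(-,D)\to M$ with $\alpha^X$ surjective for all $X\in\thick(G)$; in particular $\alpha^{G[m]}:\D(G[m],D)\to M(G[m])$ is surjective for every $m\in\Z$. Since $D\in\thick(G)$ means $D\in\thick_n(G)$ for some fixed $n$, I would argue by induction on $n$ that $\bigoplus_{m\in\Z}\D(G[m],D)$ is a finitely generated graded $\End^*(G)$-module: for $n=1$, $D$ is a summand of a finite sum of shifts of $G$, and $\bigoplus_m\D(G[m],G[r])$ is a shift of $\End^*(G)$, hence finitely generated, and finite sums and summands of f.g. graded modules stay f.g.; for the inductive step, a triangle $X\to D\to Z\rightsquigarrow$ with $X\in\thick_n(G)$, $Z\in\thick_1(G)$ gives (applying $\D(G[m],-)$ and assembling over $m$) an exact sequence of graded $\End^*(G)$-modules $\bigoplus_m\D(G[m],X)\to\bigoplus_m\D(G[m],D)\to\bigoplus_m\D(G[m],Z)$, and if $R$ — hence $\End^*(G)$ is graded noetherian, or at any rate if f.g. graded modules are closed under extensions, which holds over any ring — the middle term is f.g. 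Then $M^*(G)=\bigoplus_m M(G[m])$ is a quotient of $\bigoplus_m\D(G[m],D)$ as a graded $\End^*(G)$-module via $\bigoplus_m\alpha^{G[m]}$, which is surjective, so $M^*(G)$ is finitely generated.

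The main obstacle I anticipate is bookkeeping rather than conceptual: making the $\End^*(G)$-module structure on $M^*(G)$ and on $\bigoplus_m\D(G[m],D)$ completely precise — in particular getting the signs and degree shifts right so that "precompose the Yoneda transformation $\alpha_j$ by a morphism $G[-n_j-d]\to G[-n_j]$ coming from $\phi\in\D(G[-d],G)$" literally matches "act by $\phi$ on $x_j$" — and then verifying that reducing surjectivity of $\alpha^X$ from all $X\in\thick_1(G)$ to $X$ a single shift $G[m]$ is legitimate (it is, since $\thick_1(G)=\add(G[\Z])$ and the property "$\alpha^X$ surjective" passes to finite biproducts and to summands). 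In part (b) the only subtlety is that one does not need $R$ noetherian for the closure of finitely generated graded modules under extensions, so the induction on $n$ goes through for an arbitrary base ring, exactly as Rouquier states.
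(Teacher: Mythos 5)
Your part a) is essentially correct: choosing finitely many homogeneous generators of $M^*(G)$, assembling the corresponding Yoneda transformations into $\alpha:\D(-,A)\to M$ with $A$ a finite direct sum of shifts of $G$, and reducing surjectivity on $\thick_1(G)=\add(G[\Z])$ to surjectivity at the objects $G[m]$, where it is exactly the generation hypothesis, is a complete argument. Note for calibration that the paper itself contains no proof of either implication: it cites Rouquier's Proposition 4.8 and explicitly remarks that the author could not follow the whole argument there, so your attempt can only be judged on its own merits.

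Part b), however, has a genuine gap at the inductive (cone) step. From a triangle $X\to Y\to Z$ you obtain an exact sequence of graded $\End^*(G)$-modules $H_X\to H_Y\to H_Z$, where $H_Y=\bigoplus_m\D(G[m],Y)$; but this does not exhibit $H_Y$ as an extension of finitely generated modules. It exhibits $H_Y$ as an extension of the image of $H_Y\to H_Z$ --- which is only a submodule of $H_Z$, namely the kernel of $H_Z\to H_{X[1]}$ --- by a quotient of $H_X$, and submodules (kernels of maps) of finitely generated modules need not be finitely generated unless $\End^*(G)$ is (graded) noetherian or at least coherent. So ``closure under extensions, which holds over any ring'' does not apply, and your fallback ``$R$ noetherian, hence $\End^*(G)$ graded noetherian'' is doubly unavailable: the lemma assumes nothing on $R$, and noetherianness of $R$ would not imply noetherianness of $\End^*(G)$ in any case. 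The failure is not mere bookkeeping: without such a hypothesis the class of $D\in\thick(G)$ with $\bigoplus_m\D(G[m],D)$ finitely generated is not closed under cones. For instance, let $\Lambda$ be a non-coherent commutative ring, $\D=\Dep{\Lambda}$, $G=\Lambda$, so $\End^*(G)\cong\Lambda$; if $a:\Lambda^k\to\Lambda$ has non-finitely generated kernel, then its cone $D$ lies in $\thick_2(G)$ while $\bigoplus_m\D(G[m],D)\cong\bigoplus_n H^n(D)$ contains $\Ker(a)$ and is not finitely generated. Since $M=\D(-,D)$ is trivially $\thick(G)$-finitely generated, this also shows that statement b), read literally with no finiteness assumption on $\End^*(G)$, is problematic, which is presumably why the paper only uses the lemma in the noetherian setting of Corollary \ref{c:gr-fg} and hedges about Rouquier's full claim. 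Your reduction of b) to finite generation of $\bigoplus_m\D(G[m],D)$ and your base case are fine; it is precisely this cone step that needs either a noetherian/coherence hypothesis on $\End^*(G)$ or a genuinely different idea, and your closing assertion that the induction ``goes through for an arbitrary base ring'' is unwarranted.
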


 Note that \cite[Proposition 4.8]{R08} says more, namely that $M^*(G)$ is finitely generated as $\End^*(G)$-module if and only if $M$ is $\thick(G)$-finitely generated. But we cannot follow the whole argument, so we recorded above only the implications we are sure (and actually we need).  



\begin{cor}\label{c:gr-fg} Let $\D$ be a karoubian triangulated category, let $M:\D\to\ModR$ be a cohomological functor, and let $G\in\D$.  Suppose that $M^*(G)$ is finitely generated as $\End^*(G)$-module and $\End^*(G)$ is noetherian. Then both $M$ and the kernel of any natural transformation $\D(-,D)\to M$, with $D\in\D$, are  $\thick_1(G)$-finitely generated. If, in addition, $G\in\D$ is a strong generator, then $M$ is representable.
\end{cor}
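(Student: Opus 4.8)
The plan is to deduce the corollary from Theorem \ref{thm:main} applied with $\A=\thick_1(G)=\add(G[\Z])$, using Lemma \ref{l:fg-rouq} as the bridge that turns the hypotheses on $M^*(G)$ into the $\A$-finite-generation conditions demanded there. Write $E=\End^*(G)$ and record at the outset that $N\mapsto N^*(G)=\bigoplus_{n\in\Z}N(G[-n])$ is an \emph{exact} functor from $\Mod\D$ to graded $E$-modules, since evaluation at each object is exact and direct sums are exact in $\ModR$. The statement that $M$ is $\thick_1(G)$-finitely generated is then immediate: $M^*(G)$ is a finitely generated $E$-module by hypothesis, so Lemma \ref{l:fg-rouq} a), applied to $M$, gives the claim.

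Next I would treat the kernel $K$ of a natural transformation $\D(-,D)\to M$, for $D\in\thick(G)$ --- which is all of $\D$ exactly when $G$ is a strong generator, the case in which representability is asserted. For such $D$ the identity morphism presents $\D(-,D)$ as $\thick(G)$-finitely generated, so Lemma \ref{l:fg-rouq} b) shows that $\D(-,D)^*(G)$ is a finitely generated $E$-module. Applying the exact functor $(-)^*(G)$ to $0\to K\to\D(-,D)\to M$ embeds $K^*(G)$ as an $E$-submodule of $\D(-,D)^*(G)$; since $E$ is noetherian, $K^*(G)$ is finitely generated over $E$, and Lemma \ref{l:fg-rouq} a), now applied to $K$, yields that $K$ is $\thick_1(G)$-finitely generated.

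Finally, assume $G$ is a strong generator and set $\A=\thick_1(G)$. Then $\A[1]=\A$ because shifting permutes $G[\Z]$, and a short induction comparing the two recursive definitions --- using that each $\A^{*n}$ is already closed under finite biproducts --- gives $\thick_n(G)=\A^{*n}$ for all $n$, so $\A^{*n}=\D$ for some $n\geq1$. The subcategory $\A$ is precovering: for $D\in\D=\thick(G)$ choose finitely many homogeneous generators $x_i\colon G[-m_i]\to D$ of the $E$-module $\D(-,D)^*(G)$; the induced morphism $\bigoplus_i G[-m_i]\to D$ makes $\D(X,\bigoplus_i G[-m_i])\to\D(X,D)$ surjective for every $X\in\add(G[\Z])$, hence is an $\A$-precover. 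The previous two paragraphs supply the remaining hypotheses of Theorem \ref{thm:main}, so $M$ is a direct summand of a representable functor; since $\D$ is karoubian, $M$ is representable.

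I expect the kernel assertion to carry the real content, and within it the only nontrivial point is that $\D(-,D)^*(G)$ is finitely generated over $E$; the rest of that argument (the embedding, then ``a submodule of a finitely generated module over a noetherian ring is finitely generated'') is formal, as are the bookkeeping identity $\thick_n(G)=\A^{*n}$ and the precovering property. Note that this finite generation of $\D(-,D)^*(G)$ genuinely uses $D\in\thick(G)$, so the corollary's first sentence is to be read with that proviso --- which costs nothing for the representability conclusion, where $\thick(G)=\D$.
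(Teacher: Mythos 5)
Your proof is correct and takes essentially the same route as the paper: both parts of Lemma \ref{l:fg-rouq} together with the noetherian hypothesis on $\End^*(G)$ give the two $\thick_1(G)$-finite-generation claims, and Theorem \ref{thm:main} then yields representability when $G$ is a strong generator. You are in fact somewhat more explicit than the paper's terse proof, which only spells out the finite generation of $\D(-,D)^*(G)$ for $D\in\thick_1(G)$ and leaves the precovering of $\thick_1(G)$ and the identification $\thick_n(G)=\A^{*n}$ implicit; your proviso that the kernel assertion really uses $D\in\thick(G)$ (harmless in the strong-generator case, where $\thick(G)=\D$) matches what the paper's argument actually establishes.
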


\begin{proof} If $D\in\thick_1(G)$, then $\D(-,D)$ is clearly $\End^*(G)$-finitely generated, therefore the noetherian assumption on $\End^*(G)$ implies that the kernel of any natural transformation $\D(-,D)\to M$ has the same finiteness property. We apply twice Lemma \ref{l:fg-rouq} and we deduce the first conclusion. If $G$ is a strong generator, the second conclusion follows  by Theorem \ref{thm:main}.
\end{proof}

Recall that an algebra $A$ over $R$ is called {\em regular} if every finitely generated module is of finite projective dimension. 

\begin{cor}\label{c:noeth-alg}
Let $A$ be a noetherian algebra over $R$ and let $\D=\Dep A$ be the derived category of perfect complexes of modules over $A$. Suppose that every finitely generated $A$-module is of finite projective dimension.  Then every cohomological functor $M:\D\to\ModR$ for which $M^*(A)$ is $A$-finitely generated is representable. 
\end{cor}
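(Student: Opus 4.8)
The goal is to deduce Corollary \ref{c:noeth-alg} from Corollary \ref{c:gr-fg}, with $\D=\Dep A$ and $G=A$ (viewed as a perfect complex concentrated in degree $0$). The strategy is to verify each hypothesis of Corollary \ref{c:gr-fg} in turn, the decisive one being that $A$ is a \emph{strong} generator of $\Dep A$.

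First, note that $\D=\Dep A$ is karoubian (it is the idempotent-complete category of compact objects of $\Der A$, or directly: perfect complexes are closed under direct summands in $\Der A$), so that hypothesis is free. Next, observe $\End^*(A)=\bigoplus_{n\in\Z}\Dep A(A[-n],A)=\bigoplus_{n\in\Z}\Der A(A,A[n])$, and since $A$ is an ordinary ring sitting in degree $0$, this graded ring is concentrated in degree $0$ and equals $A$ itself (as $\Der A(A,A[n])=\Ext^n_A(A,A)=0$ for $n\neq 0$, and $=A$ for $n=0$). Hence $\End^*(A)$ is noetherian precisely because $A$ is a noetherian algebra, and the condition ``$M^*(A)$ finitely generated over $\End^*(A)$'' is literally the stated hypothesis that $M^*(A)$ is $A$-finitely generated.

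The heart of the matter is showing $A$ is a strong generator of $\Dep A$, i.e.\ $\Dep A=\thick_n(A)$ for some $n\geq 1$; this is where the regularity hypothesis (every finitely generated $A$-module has finite projective dimension) enters. The standard argument: since $A$ is noetherian, $\Dep A$ equals the bounded derived category $\Derb A$ of complexes with finitely generated cohomology exactly when $A$ has finite global dimension on finitely generated modules --- indeed, for such $A$ every bounded-above complex of finitely generated modules with bounded finitely generated cohomology has a bounded projective resolution by finitely generated projectives, hence is perfect. Let $d$ bound the projective dimension of the (finitely many, up to the relevant truncations) finitely generated modules one needs, or more carefully: by a standard ``ambidextrous/d\'evissage'' argument (as in Rouquier's work on dimension of triangulated categories, e.g.\ showing $\dim\Dep A<\infty$ for $A$ regular and noetherian), there is a single $n$ with every object of $\Dep A$ built from $A[\Z]$ in $n$ cones-and-summands steps: a perfect complex $X$ with cohomology in an interval of length $\ell$ is obtained by $\ell$ triangles from its shifted cohomology modules $H^i(X)[-i]$, each of which is a finitely generated module of projective dimension $\leq d$, hence lies in $\thick_{d+1}(A)$; combining gives $X\in\thick_{\ell(d+1)}(A)$ --- but $\ell$ is \emph{not} bounded a priori, so one instead uses that every finitely generated module sits in $\thick_{d+1}(A)$ together with Rouquier's observation that $\thick_{d+1}(A)$ is already closed ``enough'' under the relevant operations, or invokes directly \cite[Theorem 7.25 / Proposition 7.26]{R08} which states that for $A$ noetherian regular, $\Dep A$ has finite Rouquier dimension. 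The cleanest route is simply to cite this: under the stated hypotheses $A$ is a strong generator of $\Dep A$.

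With all three hypotheses of Corollary \ref{c:gr-fg} verified --- $\D$ karoubian, $M^*(G)$ finitely generated over the noetherian ring $\End^*(G)$, and $G=A$ a strong generator --- that corollary yields directly that $M$ is representable, which is the assertion. \textbf{The main obstacle} is the strong-generation claim: making rigorous that the regularity hypothesis produces a \emph{uniform} bound $n$ with $\Dep A=\thick_n(A)$, rather than merely $\Dep A=\thick(A)=\bigcup_n\thick_n(A)$. This is exactly the content of Rouquier's finiteness-of-dimension results for derived categories of regular noetherian algebras, and I would either reproduce the short d\'evissage argument (finitely generated module $\Rightarrow$ bounded f.g. projective resolution of length $\leq\gldim$, hence in a fixed $\thick_{d+1}(A)$; then every perfect complex is a finite extension of such, with the number of extension steps controlled because one may split off cohomology in pairs / use the ``two out of three'' closure of $\thick_{d+1}$ after finitely many doublings as in \cite[Lemma 7.4 ff.]{R08}) or simply cite \cite{R08} for it.
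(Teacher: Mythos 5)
Your proposal is correct and follows essentially the same route as the paper: identify $\End^*(A)\cong A$ (noetherian), invoke Rouquier's result (\cite[Theorem 7.25]{R08}) for the strong generation of $\Dep A$ by $A$ under the regularity hypothesis, and conclude by Corollary \ref{c:gr-fg}. The extra details you supply (karoubianness of $\Dep A$, the degree-zero computation of $\End^*(A)$, and the sketched d\'evissage behind Rouquier's theorem) are consistent with, and merely elaborate on, the paper's two-line argument.
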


\begin{proof} If $A$ satisfies the assumptions of the hypothesis, it is a strong generator for $\Dep A$, according to \cite[Theorem 7.25]{R08}. If is clear that $E\cong A$ is noetherian and Corollary \ref{c:gr-fg} applies.
\end{proof}

\section{Pure dimension less than or equal to 1}

From now on $\D$ will be a triangulated category with arbitrary coproducts. Recall that an object $C\in\D$ is called compact if the covariant functor $\D(C,-):\D\to\ModR$ commutes with coproducts. Denote by $\D^c$ the subcategory of all compact objects in $\D$. Suppose $\D$ is compactly generated, that is, $\D^c$ is essentially small and $\D(C,X)=0$ for all $C\in\D^c$ implies $X=0$.
A main tool in our consideration is the restricted Yoneda functor $\yon:\D\to\Mod{\D^c}$ given by $\yon X=\D(-,X)|_{\D^c}$. Thus, if we know that $\D^c$ is essentially small, then $\D$ is compactly generated exactly if the functor $\yon$ reflects zero objects. Since a map $f:X\to Y$ in $\D$ is an isomorphism if and only if the third term of its completion to a triangle vanishes, it follows further that this is equivalent to $\yon$ reflecting isomorphisms, see \cite[Lemma 6.2.8]{N01}.  Recall that the kernel ideal of $\yon$ is the ideal of {\em phantom maps}, that is, a map $\phi$ in $\D$ is phantom exactly if $\yon(\phi)=0$.

\begin{lemma}\label{l:pure-tri}\cite[Lemma 1.3]{K00}  For a triangle \[X\stackrel{\alpha}\la Y\stackrel{\beta}\la Z\stackrel{\phi}\la X[1]\] in $\D$, the following are equivalent:
\begin{enumerate}[{\rm (i)}]
    \item $\yon(\alpha)$ is a monomorphism.
    \item $\yon(\beta)$ is an epimorphism.
    \item $\yon(\phi)=0$, that is $\phi$ is a phantom map.
    \item The sequence $0\to\yon X\to\yon Y\to\yon Z\to 0$ is short exact in $\Mod{\D^c}$.
\end{enumerate}
\end{lemma}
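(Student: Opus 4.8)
\textbf{Proof plan for Lemma \ref{l:pure-tri}.}

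The plan is to exploit the fact that for every compact object $C\in\D^c$ the homological functor $\D(C,-)$ is exact on triangles, and then to transcribe exactness statements in $\ModR$, holding objectwise in $C$, into exactness statements about sequences in $\Mod{\D^c}$. The starting point is that applying $\D(C,-)$ to the given triangle yields, for each $C\in\D^c$, a long exact sequence
\[
\cdots\to\D(C,X)\xrightarrow{\alpha_*}\D(C,Y)\xrightarrow{\beta_*}\D(C,Z)\xrightarrow{\phi_*}\D(C,X[1])\to\cdots,
\]
and that $\yon(\alpha)$, $\yon(\beta)$, $\yon(\phi)$ are precisely the natural transformations given componentwise by $\alpha_*$, $\beta_*$, $\phi_*$. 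Since a sequence of $\D^c$-modules is exact if and only if it is exact after evaluation at every $C\in\D^c$, all four conditions become assertions about this single long exact sequence of $R$-modules, ranged over $C$.

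First I would record the equivalence (i)$\iff$(iii): $\yon(\alpha)$ is a monomorphism iff $\alpha_*:\D(C,X)\to\D(C,Y)$ is injective for every $C$, iff the connecting map $\phi_*:\D(C,Z[-1])\to\D(C,X)$ is zero for every $C$ (by exactness of the long sequence one place to the left), iff $\yon(\phi[-1])=0$; and since $\yon$ commutes with the shift in the evident way, $\yon(\phi[-1])=0$ iff $\yon(\phi)=0$. The equivalence (ii)$\iff$(iii) is the dual bookkeeping: $\yon(\beta)$ is an epimorphism iff $\beta_*$ is surjective for every $C$, iff $\phi_*:\D(C,Z)\to\D(C,X[1])$ is zero for every $C$, iff $\yon(\phi)=0$. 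Finally, (iv) obviously implies both (i) and (ii); and conversely, if $\yon(\phi)=0$ then for each $C$ the long exact sequence breaks into the short exact sequence $0\to\D(C,X)\to\D(C,Y)\to\D(C,Z)\to 0$, which is exactly the statement that $0\to\yon X\to\yon Y\to\yon Z\to 0$ is exact in $\Mod{\D^c}$. Chaining these gives the cycle of implications.

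I do not anticipate a genuine obstacle here; the only point requiring a little care is the compatibility of $\yon$ with shifts (so that $\yon(\phi)=0$, $\yon(\phi[-1])=0$, $\yon(\phi[1])=0$ are all equivalent), which is immediate because $[1]$ is an autoequivalence of $\D$ restricting to an autoequivalence of $\D^c$. Beyond that the argument is pure diagram chasing in the long exact sequence, done uniformly in $C$, so the proof is essentially a formality once the translation between "exact in $\Mod{\D^c}$'' and "exact after evaluating at each compact $C$'' is made explicit.
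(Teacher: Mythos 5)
Your argument is correct, and it is the standard one: the paper itself gives no proof of this lemma but simply cites it from Krause (\cite[Lemma 1.3]{K00}), whose proof proceeds exactly as you do, by evaluating the long exact sequence $\D(C,-)$ of the triangle at each compact $C$ and using that exactness in $\Mod{\D^c}$ is checked objectwise. Your explicit attention to the shift compatibility (so that $\yon(\phi)=0$ and $\yon(\phi[-1])=0$ are interchangeable, since $\D^c$ is closed under shifts) is precisely the only point needing care, and you handle it correctly.
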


We call {\em pure} triangles satisfying the equivalent conditions in Lemma \ref{l:pure-tri} above.  An object $X\in\D$ is called pure projective (injective) if $\D(X,-)$ (respectively, $\D(-,X)$ sends pure triangles into short exact sequences in $\ModK$. By \cite[Corollary 4.5]{CS98} or \cite[Lemma 8.1]{Be00}, the class of pure projective objects in $\D$ coincides with $\Add({\D^c})$. It is clear that this class is precovering, in the sense that for every $X\in\D$ there is an object $P_X\in\Add(\D^c)$ and a map $P_X\to X$ such that every map $P\to X$, with $P\in\Add(\D^c)$ factors through the previous one. We say that $X\in\D$ is of pure projective dimension less than or equal to 1, and we write $\ppdim X\leq1$ if there is a pure triangle
$P_1\to P_0\to X\rightsquigarrow$ with $P_0,P_1$ being pure projective. We denote by $\D_d$ the full subcategory of $\D$ consisting of the objects $X\in\D$ with $\ppdim X\leq1$. 

\begin{prop}\label{p:ppdim}
The following are equivalent for an objects $X\in\D$:
\begin{enumerate}[{\rm (i)}]
    \item $\ppdim X\leq1$.
    \item $\yon_{X,Y}:\D(X,Y)\to\Hom_{\D^c}(\yon X,\yon Y)$ is surjective, for all $Y\in\D$.
\end{enumerate}
In particular, the restriction of $\yon$ to $\D_{d}\to\Mod{\D^c}$ is full and its essential image consists of those ${\D^c}$-modules of projective dimension less than or equal to 1.
\end{prop}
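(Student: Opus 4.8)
The plan is to prove the equivalence (i)$\Leftrightarrow$(ii) and then harvest the "in particular" statement as a formal consequence.

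\medskip

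\emph{Proof plan.} First I would prove (i)$\Rightarrow$(ii). Assume $\ppdim X\leq 1$, so there is a pure triangle $P_1\xrightarrow{\ p\ }P_0\xrightarrow{\ q\ }X\xrightarrow{\ \phi\ }P_1[1]$ with $P_0,P_1\in\Add(\D^c)$. Given $Y\in\D$ and a morphism $u\colon\yon X\to\yon Y$ in $\Mod{\D^c}$, I want to lift it to a genuine map $X\to Y$. Apply $\yon$ to the triangle to get the short exact sequence $0\to\yon P_1\to\yon P_0\to\yon X\to 0$ (Lemma \ref{l:pure-tri}(iv)). Now the key point is that $P_0$ and $P_1$ are pure projective, hence direct summands of objects of the form $\bigoplus_i C_i$ with $C_i\in\D^c$; for such objects the restricted Yoneda functor is fully faithful, because $\yon$ sends coproducts of compacts to coproducts of representables and $\Hom_{\D^c}(-,\yon Y)$ turns these coproducts into products matching $\D(\bigoplus_i C_i, Y)=\prod_i\D(C_i,Y)$. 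So the composites $u\circ\yon(q)\colon\yon P_0\to\yon Y$ and $u\circ\yon(q)\circ\yon(p)\colon\yon P_1\to\yon Y$ come from unique maps $v_0\colon P_0\to Y$ and $v_1\colon P_1\to Y$ with $\yon(v_1)=u\,\yon(qp)=u\,\yon(q)\,\yon(p)$, and by faithfulness on $P_1$ we get $v_0 p = v_1$; in particular $\yon(v_0 p)=0$, and since $\yon$ is faithful on $P_1$ this forces $v_0 p=0$ outright. Then $v_0$ factors through $q$, i.e. $v_0 = w\circ q$ for some $w\colon X\to Y$. It remains to check $\yon(w)=u$: precomposing with the epimorphism $\yon(q)$ gives $\yon(w)\,\yon(q)=\yon(v_0)=u\,\yon(q)$, and since $\yon(q)$ is an epimorphism in $\Mod{\D^c}$ we conclude $\yon(w)=u$. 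This proves (ii).

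\medskip

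For (ii)$\Rightarrow$(i): choose a pure-projective precover $P_0\to X$ (these exist since $\Add(\D^c)$ is precovering and, by Lemma \ref{l:pure-tri}, a precover can be arranged so that $\yon P_0\to\yon X$ is an epimorphism — indeed take $P_0=\bigoplus_{C\in\D^c}\bigoplus_{x\in\D(C,X)}C$ mapping to $X$ tautologically). Complete to a triangle $Z\to P_0\to X\xrightarrow{\phi} Z[1]$. Since $\yon P_0\to\yon X$ is epi, this triangle is pure by Lemma \ref{l:pure-tri}, so $0\to\yon Z\to\yon P_0\to\yon X\to 0$ is exact and $\yon Z$ is the kernel of a map between projectives in $\Mod{\D^c}$. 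To finish, I must produce a pure-projective object $P_1$ and a pure triangle $P_1\to P_0\to X\rightsquigarrow$. The idea: take a pure-projective precover $P_1\to Z$ (arranged so $\yon P_1\to\yon Z$ is epi), giving a pure triangle whose connecting map $Z\to P_1[1]$ is phantom; I want to replace $Z$ by $P_1$ in the triangle for $X$. This is where hypothesis (ii) enters. Using (ii) for the object $X$ one shows the connecting map $\phi\colon X\to Z[1]$, which is already phantom, lifts through the pure-projective precover of $Z$: more precisely, $\yon(\phi)=0$, and one wants $\phi$ itself to factor as $X\to P_1[1]\to Z[1]$ — but $\phi$ need not factor through $P_1[1]\to Z[1]$ in general. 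The correct route is to apply the octahedral axiom to the composite $P_1\to Z\to P_0$: this yields a triangle $P_1\to P_0\to X'\rightsquigarrow$ and a map $X'\to X$ which, because both relevant triangles are pure and $\yon P_1\to\yon Z$, $\yon P_0\to\yon X$ are epi, induces an isomorphism $\yon X'\xrightarrow{\sim}\yon X$; then (ii) (together with the already-established (i)$\Rightarrow$(ii) for $X'$, which has $\ppdim\leq 1$ by construction) upgrades this to a genuine isomorphism $X'\cong X$ in $\D$, whence $\ppdim X\leq 1$.

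\medskip

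I expect the main obstacle to be exactly this last step: turning the isomorphism $\yon X'\xrightarrow{\sim}\yon X$ into an honest isomorphism $X'\cong X$. The subtlety is that $\yon$ reflects isomorphisms only between objects of $\D$ that are already known to be "nice," and here one must bootstrap: $X'$ visibly has $\ppdim\leq 1$, so by the forward implication $\yon_{X',Y}$ is surjective for all $Y$; applying this with $Y=X'$ and $Y=X$ lets us lift the inverse isomorphism $\yon X\to\yon X'$ to a map $X\to X'$ and check that both composites are the identity after applying $\yon$, hence (since they are endomorphisms of $X'$, and $\yon$ is faithful on... ) — the genuinely delicate point is faithfulness, which is \emph{not} automatic, so instead one argues that the cone of $X'\to X$ has vanishing $\yon$, and being itself built from a pure triangle of pure-projectives it must be zero by compact generation. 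Once (i)$\Leftrightarrow$(ii) is in hand, the "in particular" clause is immediate: fullness of $\yon|_{\D_d}$ is precisely the content of (ii) for all $X\in\D_d$; and the essential image is characterized by noting that for $X\in\D_d$ the pure triangle gives a projective resolution $0\to\yon P_1\to\yon P_0\to\yon X\to 0$ of length one in $\Mod{\D^c}$ (here $\yon P_0,\yon P_1$ are projective since $\Add(\D^c)$-objects go to direct summands of coproducts of representables), so $\mathrm{pd}\,\yon X\leq 1$; conversely any $\D^c$-module $N$ of projective dimension $\leq 1$ is cohomological (it has a presentation $\D^c(-,C_1)\to\D^c(-,C_0)\to N\to 0$, and these lift to $\D$), hence of the form $\yon X$ by the Brown representability for $\Mod{\D^c}$ built into the compactly generated setting, and then the length-one projective resolution forces $\ppdim X\leq 1$, i.e. $X\in\D_d$.
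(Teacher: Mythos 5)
Your direction (i)$\Rightarrow$(ii) is correct and is essentially the paper's own argument (pure triangle plus bijectivity of $\yon_{P,Y}$ for pure projective $P$, then a diagram chase), and your description of the essential image is in substance the paper's as well. The genuine gap is in (ii)$\Rightarrow$(i), at the octahedron step. Write $a\colon P_1\to Z$ and $b\colon Z\to P_0$, and let $C_a$ be the cone of $a$. The octahedron gives the triangle $C_a\to X'\xrightarrow{\pi}X\to C_a[1]$, and your claim is that $\yon(\pi)$ is invertible because $\yon(a)$ and $\yon P_0\to\yon X$ are epimorphisms. This is false in general: since $\yon(a)$ is epi, the map $\yon Z\to\yon C_a$ is zero, so $\yon C_a$ embeds into $\yon P_1[1]$ with image $\Ker(\yon(a)[1])$; moreover the composite $C_a\to X'\to P_1[1]$ is the connecting map of the triangle on $a$, so $\yon(C_a\to X')$ is a monomorphism, and exactness gives $\Ker\yon(\pi)\cong\yon C_a\cong(\Ker\yon(a))[1]$. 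Thus $\yon(\pi)$ is an isomorphism only when the chosen precover $P_1\to Z$ is a $\yon$-isomorphism, i.e.\ only when $\yon Z$ is already projective --- essentially what one is trying to prove, and not something a precover provides. (Sanity check: replace $P_1\to Z$ by $P_1\oplus C\to Z$ with $C\in\D^c$ mapping by zero; then $X'$ becomes $X'\oplus C[1]$, so no formal argument from the stated hypotheses can give $\yon X'\cong\yon X$.) The same computation undermines your parenthetical claim that $X'$ ``visibly has $\ppdim\le1$ by construction'': the triangle $P_1\to P_0\to X'$ is pure only if $\yon(ba)$ is a monomorphism, and its kernel is $\Ker\yon(a)$. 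Finally, note that hypothesis (ii) is never actually invoked at a point where it could repair this.

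The paper's proof uses the same starting data (your $\yon(ba)$, $\yon(q)$ do give a projective presentation $\yon P_1\to\yon P_0\to\yon X\to0$) but a different endgame: instead of trying to replace $X$ by the cone $Y$ of the composite $P_1\to P_0$, it forms the morphism of triangles with $\gamma\colon Y\to X$ over the identity of $P_0$, produces from the two presentations a comparison map $\yon X\to\yon Y$, and here is where (ii) is used: this map is $\yon(\delta)$ for some $\delta\colon X\to Y$. Then $\yon(\gamma\delta)=\mathrm{id}_{\yon X}$ because $\yon P_0\to\yon X$ is epi, and since $\yon$ reflects isomorphisms, $\gamma\delta$ is invertible. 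The resulting splitting of the triangle morphism shows that the cocone $Z$ of $P_0\to X$ is a direct summand of $P_1$, hence pure projective, and the (already pure) triangle $Z\to P_0\to X\rightsquigarrow$ witnesses $\ppdim X\le1$. So the correct move is to prove that $Z$ itself is pure projective, not to modify $X$. For the essential image, the paper also argues more directly than your appeal to Brown representability: the monomorphism $\yon P_1\to\yon P_0$ is $\yon(\alpha)$ for a unique $\alpha$, and its cone gives a pure triangle realizing $M$ as $\yon X$ with $\ppdim X\le1$.
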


\begin{proof}
First note that if $P$ is pure projective, then $\yon_{P,Y}$ is bijective for all $Y\in\D$. Indeed this property are inherited by direct summands, and for a coproducts of compact objects $P\cong\coprod_{i\in\I}C_i$, this follows from Yoneda lemma since
    \begin{align*}
      \Hom_{\D^c}\left(\yon(\coprod_{i\in I}C_i),\yon Y\right)&\cong\Hom_{\D^c}\left(\coprod_{i\in I}\yon(C_i),\yon Y\right)\cong\prod_{i\in I}\Hom_{\D^c}(\yon C_i,\yon Y)\\
      &\cong\prod_{i\in I}\D(C_i,Y)\cong\D\left(\coprod_{i\in I}C_i,Y\right).
    \end{align*}
   (i)$\Rightarrow$(ii).  Let $X\in\D$ be an object with $\ppdim X\leq1$. Then there is a pure triangle $P_1\to P_0\to X\rightsquigarrow$, with $P_1,P_0\in\Add(\D^c)$. For $Y\in\D$, the commutative diagram with exact rows
\[\diagram &\D(X,Y)\rto\dto_{\yon_{X,Y}}&\D(P_0,Y)\rto\dto_{\yon_{P_0,Y}}^{\cong}&\D(P_1,Y)\dto_{\yon_{P_1,Y}}^{\cong}\\
0\rto&\Hom_{\D^c}(\yon X,\yon Y)\rto&\Hom_{\D^c}(\yon P_0,\yon Y)\rto&\Hom_{\D^c}(\yon P_1,\yon Y)
\enddiagram\]
shows that $\yon_{X,Y}$ is surjective.

(ii)$\Rightarrow$(i). Consider a projective presentation $\yon P_1\to \yon P_0\to \yon X\to 0$ of $\yon X$ in $\Mod{\D^c}$. Since $\yon_{P,-}$ is bijective for all $P\in\Add(\D^c)$, there are unique maps $P_1\stackrel{\alpha}\la P_0\stackrel{\beta}\la X$ that induce the above presentation. The composition $\beta\alpha$ vanishes, therefore completing $\alpha$ and $\beta$ to triangles, we get a  morphism of triangles in $\D$:
\[\diagram
P_1\rto^{\alpha}\dto& P_0\rto\ddouble& Y\rto\dto^{\gamma}& P_1[1]\dto\\
X'\rto& P_0\rto^{\beta} & X\rto &X'[1]
\enddiagram\]

Observe that $\yon(\beta)$ is surjective, therefore the lower triangle is pure. On the other hand, from the commutative diagram with exact rows in $\Mod{\D^c}$:
\[\diagram
    \yon P_1\rto\ddouble&\yon P_0\rto\ddouble& \yon X\rto& 0\\
    \yon P_1\rto&\yon P_0\rto&\yon Y&
    \enddiagram\]
we get a map $\yon X\to\yon Y$ that completes it commutatively. By hypothesis, it must be of the form $\yon(\delta)$ for some $\delta:X\to Y$ in $\D$. It follows that $\yon(\gamma\delta)$ is an isomorphism, therefore, so is $\gamma\delta$ too. This implies that the above morphism of triangles splits, so $X'$ is isomorphic to a direct summand of $P_1$. This shows that $X'$ is pure projective, which precisely means that $\ppdim X\leq1$.

Finally, it is clear that the functor $\D_d\stackrel{\subseteq}\la\D\stackrel{\yon}\la\Mod{\D^c}$ is full and for every $X\in\D_d$, we have $\pdim\yon X\leq1$. In contrast, if $M\in\Mod{\D^c}$ has $\pdim M\leq 1$, then there is a short exact sequence $0\to\yon P_1\to\yon P_0\to M\to 0$ in $\Mod{\D^c}$, with $P_1,P_0\in\Add(\D^c)$. The map $\yon P_1\to\yon P_0$ is of the form $\yon(\alpha)$ for a unique $\alpha:P_1\to P_0$ in $\D$. Since $\yon(\alpha)$ is a monomorphism, it follows that the triangle that completes it $P_1\stackrel{\alpha}\la P_0\to X\rightsquigarrow$ is pure, therefore, $\ppdim X\leq 1$ and $M\cong\yon X$.
 \end{proof}

\begin{cor}\label{c:ppdim-for-hocolim}
    Let $X\in\D$ such that $X\cong\hocolim_{n\geq0}P_n$, with $P_n\in\Add{\D^c}$. Then $X\in\D_d$ and, consequently, $\yon_{X,Y}$ is surjective for all $Y\in\D$.
\end{cor}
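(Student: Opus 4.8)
The plan is to realize the defining triangle of $X=\hocolim_{n\geq0}P_n$ as a \emph{pure} triangle $P_1\to P_0\to X\rightsquigarrow$ with $P_0,P_1$ pure projective, and then to quote Proposition~\ref{p:ppdim}. Recall that $X$ sits in a triangle
\[\coprod_{n\geq0}P_n\stackrel{1-\sigma}{\la}\coprod_{n\geq0}P_n\la X\rightsquigarrow,\]
where $\sigma$ is assembled from the structure maps $P_n\to P_{n+1}$ of the telescope. Since each $P_n$ lies in $\Add(\D^c)$, so does $\coprod_{n\geq0}P_n$ (a coproduct of direct summands of coproducts of compact objects is again of that shape), hence $\coprod_{n\geq0}P_n$ is pure projective. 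So the only thing to check is that the triangle above is pure.

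By Lemma~\ref{l:pure-tri}, this amounts to showing that $\yon(1-\sigma)$ is a monomorphism in $\Mod{\D^c}$. First I would use that $\D(C,-)$ preserves coproducts for every $C\in\D^c$ to obtain a canonical identification $\yon\bigl(\coprod_{n}P_n\bigr)\cong\coprod_{n}\yon P_n$ under which $\yon(1-\sigma)$ becomes the telescope map $1-\yon(\sigma)\colon\coprod_n\yon P_n\to\coprod_n\yon P_n$ of the direct system $(\yon P_n)_{n\geq0}$. Now $\Mod{\D^c}$ is a Grothendieck category, so sequential directed colimits are exact there; consequently the canonical sequence $0\to\coprod_n\yon P_n\stackrel{1-\yon(\sigma)}{\la}\coprod_n\yon P_n\to\colim_n\yon P_n\to0$ is exact. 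In particular $\yon(1-\sigma)$ is a monomorphism, so the triangle is pure, $\ppdim X\leq1$, and $X\in\D_d$ (and, as a bonus, $\yon X\cong\colim_n\yon P_n$).

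The last clause is then immediate from Proposition~\ref{p:ppdim}: every object of $\D_d$ has the property that $\yon_{X,Y}\colon\D(X,Y)\to\Hom_{\D^c}(\yon X,\yon Y)$ is surjective for all $Y\in\D$.

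There is no genuine obstacle here; the single point that requires a line of justification is the identification of $\yon(1-\sigma)$ with the telescope map of $(\yon P_n)$, whose injectivity is just the standard presentation of a sequential colimit in a Grothendieck category. Everything else is a direct appeal to Lemma~\ref{l:pure-tri} and Proposition~\ref{p:ppdim}.
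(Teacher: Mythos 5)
Your proof is correct and follows essentially the same route as the paper: exhibit the telescope triangle $\coprod_{n\geq0}P_n\to\coprod_{n\geq0}P_n\to X\rightsquigarrow$ as a pure triangle with pure projective outer terms and then invoke Proposition \ref{p:ppdim}. The only difference is that where the paper simply cites \cite[Lemma 2.8]{N96} for the purity, you verify it directly by identifying $\yon(1-\sigma)$ with the telescope map on $\coprod_n\yon P_n$ and using the exact sequential-colimit presentation in $\Mod{\D^c}$, which is a valid, self-contained substitute for that citation.
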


\begin{proof}
    Note that the triangle defining the homotopy colimit (up to a non-canonical isomorphism): \[\coprod_{n\geq0}P_n\stackrel{1-shift}\la\coprod_{n\geq0}P_n\la X\rightsquigarrow\] is pure according to \cite[Lemma 2.8]{N96}. Therefore, $\ppdim X\leq1$ and consequently, Proposition \ref{p:ppdim} applies.
\end{proof}

\section{Locally coherent cohomological functors}

  Fix a module $M\in\Mod{\D^c}$, and an object $G\in\D^c$. We will say that $M$ is $G$-{\em locally coherent}, provided that the following two conditions hold:
 \begin{enumerate}[\rm (LC1)]
  \item  There are $D\in\D^c$ and a natural transformation $\alpha:\yon D\to M$ such that $\alpha^G:\D^c(G,D)\to M(G)$ is surjective.
  \item If $\beta:\yon D\to M$ is a natural transformation, then there is a map $f:C\to D$ in $\D^c$ such that $\beta\yon f=0$ and \[\D^c(G,C)\to\D^c(G,D)\to M(G)\]
is exact.  
\end{enumerate} 
If $\G\subseteq\D^c$ is a set of compact objects, we say that $M$ is $\G$-{\em locally coherent}, provided that it is $G$-locally coherent for every $G\in\G$. 
We will say that an object $X\in\D$ has $G$-{\em locally coherent cohomology} if $\yon X\in\Mod{\D^c}$ is
$G[\Z]$-locally coherent.

Note that conditions (LC1) and (LC2) above are inspired by conditions (a) and (b) characterizing locally finitely presented functors in \cite[pag. 205]{R08}. Sure, we regard the category $\D^c$ as a subcategory in the larger $\D$ but this difference is not essential. More important is that the requirement in (LC1) and (LC2) is only about $\alpha^G$, respectively $\beta^G$, while \cite{R08} the conclusion is about $\alpha^{G[n]}$ and $\beta^{G[n]}$, for all $n\in\Z$. Even if we consider a $G[\Z]$-locally coherent module (i. e. functor) $M$, it seems that our notion is weaker than that of a locally finitely presented functor in \cite{R08}, since the natural transformations $\alpha$ and $\beta$ depend here on $n\in\Z$, while in \cite{R08} they are unique. 
However, some arguments can be used, as observed in \cite[Remark 4.7]{R08}. 

\begin{lemma}\label{l:lcoh} Let $M\in\Mod\D^c$ and $G\in\D^c$. Then: 
\begin{enumerate}[\rm (a)]
    \item $\D^c\subseteq\D_G$.
    \item $M$ is $G$-locally coherent \iff $M$ satisfies (LC1) and the kernel of any natural transformation $\yon C\to M$ satisfies (LC1).
   \item If $\beta_i:\yon D_i\to M$, with $i=1,2$ are so that (LC2) holds for $\beta=\beta_1+\beta_2:\yon(D_1\oplus D_2)\to M$, then (LC2) holds for $\beta_i$ ($i=1,2$). 
   \item Assume (LC1) holds. If (LC2) holds for those $\beta:\yon D\to M$, with $D\in\D^c$, for which $\beta^{G}:\D^c(G,D)\to M(G)$ is surjective, then (LC2) holds for all $\beta$. 
   \item If $M$ is $G$-locally coherent, then $M$ is $X$-locally coherent for any direct summand $X$ of $G$. 
\end{enumerate}
Suppose in addition that $M$ is cohomological; then 
\begin{enumerate}[\rm (f)]
  \item If $X_1\to X\to X_2\rightsquigarrow$ is a triangle and $M$ is $X_i$-locally coherent, for $i=1,2$, then $M$ is $X$-locally coherent. 
  \item[\rm (g)] If $M$ is $G[\Z]$-locally coherent then $M$ is $\thick(G)$-locally coherent. 
\end{enumerate}
\end{lemma}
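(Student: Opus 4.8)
The plan is to prove the two parts of Lemma \ref{l:lcoh} labelled (f) and (g) by combining the earlier items (a)--(e) of the same lemma with the inductive structure of $\thick_n(G)$, mirroring exactly the pattern of Lemma \ref{l:HisA*n-fg} but in the ``locally coherent'' setting.

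For part (f), suppose $X_1\to X\to X_2\rightsquigarrow$ is a triangle with $M$ being $X_i$-locally coherent for $i=1,2$; I want to show $M$ is $X$-locally coherent. First I would establish (LC1) for the object $X$: pick natural transformations $\alpha_i:\yon D_i\to M$ with $D_i\in\D^c$ such that $\alpha_i^{X_i}$ is surjective, and form $\alpha=\alpha_1\oplus\alpha_2:\yon(D_1\oplus D_2)\to M$. A diagram chase using the long exact sequence obtained by applying $\D(-,D_1\oplus D_2)$ and $M(-)$ to the triangle (and using that $M$ is cohomological) shows $\alpha^X$ is surjective — this is the standard ``two out of three gives the middle'' argument for finite generation along a triangle. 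Then by Lemma \ref{l:lcoh}(b) it suffices to check that the kernel $K$ of an arbitrary $\beta:\yon C\to M$ (with $C\in\D^c$) again satisfies (LC1) relative to $X$. Here one uses that $K$, being a subfunctor of the representable $\yon C$ restricted to $\D^c$, is itself reasonably behaved, and that (LC2) for $M$ at $X_1$ and $X_2$ feeds surjectivity of the relevant maps $\D^c(X_i,-)\to$; the key point is to produce, out of the precovers/approximations at $X_1$ and $X_2$, a single map into $C$ whose image captures $K(X)$. I would invoke parts (c) and (d) to reduce to the case where $\beta^{X}$ (or $\beta^{X_i}$) is surjective, which simplifies the bookkeeping.

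For part (g), assuming $M$ is $G[\Z]$-locally coherent, I would prove by induction on $n$ that $M$ is $\thick_n(G)$-locally coherent; taking the union over $n$ gives $\thick(G)$-local coherence. The base case $n=1$: $\thick_1(G)=\add(G[\Z])$, so by hypothesis $M$ is $G[n]$-locally coherent for every $n\in\Z$, and by part (e) (closure under direct summands) together with part (f) applied to the split triangles giving finite direct sums, $M$ is $\add(G[\Z])$-locally coherent. The inductive step: an object $Y\in\thick_{n+1}(G)$ is a direct summand of some $Y'$ fitting in a triangle $X\to Y'\to Z\rightsquigarrow$ with $X\in\thick_n(G)$ and $Z\in\thick_1(G)$; by the inductive hypothesis $M$ is $X$-locally coherent, by the base case $M$ is $Z$-locally coherent, so part (f) gives that $M$ is $Y'$-locally coherent, and then part (e) passes this to the direct summand $Y$.

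The main obstacle I anticipate is entirely inside part (f), specifically the verification that the kernel functor $K$ satisfies (LC1) relative to $X$ (equivalently, the ``(LC2) part'' of $G$-local coherence for the triangle). Proving $\alpha^X$ surjective is a routine cohomological diagram chase, but for the kernel one must chase through two layers simultaneously: given $\beta:\yon C\to M$, one uses (LC2) for $X_1$ to get $f_1:C_1\to C$, and (LC2) for $X_2$ to get $f_2:C_2\to C$, then must splice these — together with the connecting map of the triangle lifted into $\D^c$ — into one morphism $C'\to C$ with the correct exactness at the single object $X$. This is precisely the analogue of the delicate middle portion of the proof of Lemma \ref{l:HisA*n-fg}, and I would model the argument on that proof, using naturality of $\beta$ and the octahedral axiom to realize the needed lift of the triangle's rotation map; reducing via (c) and (d) to the surjective-on-$G$ case is what makes this manageable.
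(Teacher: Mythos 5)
Your plan for (g) coincides with the paper's: the paper disposes of (g) with ``follows from (e) and (f)'', and your induction on $\thick_n(G)$ (split triangles plus (e) for the base case $\add(G[\Z])$, then (f) and (e) for the inductive step) is exactly that argument. But note that (a)--(e) are also part of the statement; you treat them as given, whereas the paper proves them by citing \cite[Lemmas 4.2 and 4.4]{R08} and declaring (b) and (e) immediate. The real issue is inside (f), where you have located the difficulty in the wrong place. You assert that, with $\alpha=\alpha_1\oplus\alpha_2$ and $\alpha_i^{X_i}$ surjective, surjectivity of $\alpha^X$ is ``a routine cohomological diagram chase --- two out of three gives the middle.'' It is not. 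In the ladder comparing the exact rows
\[\D^c(X_2,D)\to\D^c(X,D)\to\D^c(X_1,D)\to\D^c(X_2[-1],D)\quad\hbox{and}\quad M(X_2)\to M(X)\to M(X_1)\to M(X_2[-1]),\]
the four lemma needs, besides surjectivity at $X_1$ and $X_2$, injectivity (or a substitute) at the rotated vertex $X_2[-1]$, which is not among your hypotheses. Concretely: given $y\in M(X)$, you can lift $M(u)(y)$ to some $g_1\colon X_1\to D$, but $g_1$ extends along $u\colon X_1\to X$ only after it has been corrected so that its composite with the connecting map $X_2[-1]\to X_1$ vanishes, and producing that correction is exactly where the (LC2)-type exactness is needed --- in \cite[Lemma 4.6]{R08} (which the paper's proof of (f) invokes) this is supplied by condition (b) being imposed at \emph{all shifts} of $X_1$ and $X_3$, and in the paper's Lemma \ref{l:HisA*n-fg} by the hypothesis $\A[1]=\A$. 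Since (LC1)/(LC2) in this paper are imposed only at the single objects $X_1$ and $X_2$, the (LC1)-for-$X$ step is precisely the delicate point of (f), not a formality.

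For the half you do flag as hard --- verifying (LC2) for $X$ --- your text only gestures at a construction (``splice'' the maps $f_1,f_2$ using the octahedral axiom, ``model on Lemma \ref{l:HisA*n-fg}''), without actually producing the map $C'\to C$ and checking exactness at $X$; and your remark that the kernel $K$ of $\beta\colon\yon C\to M$ is ``reasonably behaved'' because it is a subfunctor of a representable carries no weight here, since nothing in the locally coherent setting is known about subfunctors per se. So as written the proposal is a plan rather than a proof of (f). The paper itself does not write this chase out either --- it defers wholesale to the argument of \cite[Lemma 4.6]{R08} --- but if you want a self-contained proof you must carry out that argument, and in doing so confront the fact that Rouquier's conditions are shift-invariant by definition while (LC1)/(LC2) here are not; either run the induction of (g) with shift-closed classes (where the needed conditions at $X_2[-1]$, $X_1[1]$ are available because $G[\Z]$ and $\thick_n(G)$ are stable under shifts), or strengthen the hypotheses of (f) accordingly.
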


\begin{proof}
For (a) use the proof of \cite[lemma 4.2]{R08}. 
Statement (b) is obvious, and statement (e) follows immediately. For (c) and (d) use the proof of \cite[Lemma 4.4]{R08}. Statement (f) uses the same argument as \cite[Lemma 4.6]{R08} and statement (g) follows from (e) and (f). 
\end{proof}

Note that if $G\in\D^c$ is a single generator for $\D$ in the sense that $X\in\D$ such that $\D(G[i],X)=0$ implies $X=0$, then $\thick(G)=\D^c$. In this case,  we denote 
\[\D_{lcoh}=\{X\in\D\mid X\hbox{ has }G\hbox{-locally coherent cohomology}\}\]  
and Lemma \ref{l:lcoh} (f) implies that $\D_{lcoh}$ does not really depend on $G$ since 
\[\D_{lcoh}=\{X\in\D\mid X\hbox{ has }D\hbox{-locally coherent cohomology, for any }D\in\D^c\}.\]  

\begin{lemma}\label{l:approx}
Let $M\in\Mod{\D^c}$ be a cohomological module and $G\in\D^c$, such that $M$ is $G[\Z]$-locally coherent. Then there is a chain of compact objects $D_0\to D_1\to D_2\to\ldots$ together with natural transformations $\varphi_n:\yon D_n\to M$ compatible with the induced maps $\yon D_n\to\yon D_{n+1}$ such that the natural transformation $\colim\yon D_n\to M$ evaluated at $G[k]$ is invertible for all $k\in\Z$. Furthermore, if we denote $D=\hocolim D_n$, then we get a natural isomorphism
\[\D(X,D)\to M(X),\hbox{ for all }X\in\thick(G).\]
\end{lemma}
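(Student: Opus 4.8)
The plan is to build the chain $D_0\to D_1\to\cdots$ by a two-step inductive procedure that alternates between condition (LC1) and condition (LC2), precisely mirroring the construction in Lemma 2.6 (the proof of Lemma \ref{l:HisA*n-fg}) but now working inside $\D^c$ and tracking all shifts $G[k]$ simultaneously. First I would invoke (LC1) applied to $M$ at each object $G[k]$ to produce, after taking a finite coproduct of shifts (using that $M$ is cohomological so $M(G[k])=0$ for $|k|\gg 0$ is \emph{not} available here — instead one uses (LC1) for each $k$ separately and then, at stage $n$, only needs surjectivity at $G[k]$ for $|k|\le n$, say), an object $D_0\in\D^c$ and $\varphi_0:\yon D_0\to M$ with $\varphi_0^{G[k]}$ surjective for the relevant range of $k$. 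Then, given $\varphi_n:\yon D_n\to M$, I would let $K_n$ be its kernel; by Lemma \ref{l:lcoh}(b), $K_n$ satisfies (LC1), so I can choose $B_n\in\D^c$ and $\yon B_n\to K_n\subseteq\yon D_n$, which by Yoneda is $\yon b_n$ for a map $b_n:B_n\to D_n$; completing $b_n$ to a triangle $B_n\to D_n\to D_{n+1}\rightsquigarrow$ and using that $M$ is cohomological (exactly as in the proof of Lemma \ref{l:HisA*n-fg}) gives $\varphi_{n+1}:\yon D_{n+1}\to M$ with $\varphi_{n+1}$ restricting to $\varphi_n$ along $\yon D_n\to\yon D_{n+1}$. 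The point of the construction is that $\varphi_{n+1}^{G[k]}$ becomes an \emph{isomorphism} at $G[k]$ for the $k$'s that were "processed" at stage $n$, because (LC2) was used to kill the kernel there.

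Next I would check that $\colim_n\yon D_n\to M$, evaluated at each $G[k]$, is an isomorphism. Surjectivity is immediate from (LC1) at stage $0$ (or any stage past $|k|$). For injectivity: an element of $\colim_n\D^c(G[k],D_n)$ mapping to $0$ in $M(G[k])$ is represented by some $u:G[k]\to D_n$ with $\varphi_n^{G[k]}(u)=0$, i.e. $u\in K_n(G[k])$; since $K_n$ satisfies (LC1), the chosen $\yon B_n\to K_n$ is surjective at $G[k]$ once we are far enough along the chain, so $u$ factors through $b_n:B_n\to D_n$, hence $u$ becomes zero in $\D^c(G[k],D_{n+1})$ since $b_n$ composed with $D_n\to D_{n+1}$ is zero by construction of the triangle. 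Thus the colimit map is bijective at every $G[k]$. (One must organize the bookkeeping so that every $k$ is eventually and permanently processed; interleaving the stages over $k\in\Z$ handles this.)

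For the final assertion, set $D=\hocolim_n D_n$. The compatible maps $D_n\to D$ and the $\varphi_n$ assemble, via $\colim_n\D(X,D_n)\to\D(X,D)$ (which holds because each $G[k]$, being compact, sees the homotopy colimit as an honest colimit, and more generally because objects of $\thick(G)$ are built in finitely many triangles and summands from $G[\Z]$), into a natural transformation $\D(-,D)|_{\thick(G)}\to M|_{\thick(G)}$. At the generators $G[k]$ this is exactly the map $\colim_n\D^c(G[k],D_n)\cong\colim_n\yon D_n(G[k])\to M(G[k])$ shown above to be an isomorphism — here one uses compactness of $G$ so that $\D(G[k],\hocolim D_n)\cong\colim_n\D(G[k],D_n)$, cf. \cite[Lemma 2.8]{N96}. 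Finally, both $\D(-,D)$ and $M$ are cohomological functors on $\D$ (hence on $\thick(G)$), so the full subcategory of $\thick(G)$ on which $\D(X,D)\to M(X)$ is an isomorphism is triangulated and closed under summands; since it contains $G[\Z]$, it is all of $\thick(G)$ by the five-lemma argument applied along the triangles defining $\thick_n(G)$.

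The main obstacle I expect is the bookkeeping in the induction: condition (LC2) only controls a \emph{single} shift at a time (the natural transformations $\alpha,\beta$ depend on $k$, as the authors emphasize in the remark preceding Lemma \ref{l:lcoh}), so a naive construction fixes the kernel at $G[k]$ but may reintroduce a nonzero kernel at $G[k']$ when passing to $D_{n+1}$. The fix is that each step only enlarges the kernel by the image of a \emph{compact} object, and by diagonalizing over $\Z\times\N$ one ensures every $(k,\text{generator of }K_n(G[k]))$ is eventually hit; combined with the fact that once $u\in\D^c(G[k],D_n)$ maps to $0$ it stays $0$ in all later $D_m$, the colimit at each $G[k]$ stabilizes to a bijection. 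Verifying that this interleaving actually terminates in the required sense — i.e. that $\colim_n\yon D_n\to M$ is an isomorphism at \emph{every} $G[k]$, not just a cofinal set — is the one place where care is genuinely needed; everything else is a routine transcription of the Lemma \ref{l:HisA*n-fg} argument together with standard homotopy-colimit facts.
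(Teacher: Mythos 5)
Your plan is correct and follows essentially the same route as the paper's proof: an inductive construction of the chain that at stage $n$ kills the kernel of $\varphi_n$ via a triangle on a compact object (your use of Lemma \ref{l:lcoh}(b) is just the kernel reformulation of (LC2)) while using (LC1) to add surjectivity at the newly admitted shifts, followed by the same ``element dies at the next stage'' argument for bijectivity at each $G[k]$ and the same compactness-plus-five-lemma step for $\thick(G)$. The only cosmetic difference is bookkeeping: where you interleave over $\Z\times\N$, the paper simply works with the finite coproducts $G_n=\coprod_{|i|\le n}G[i]$ (using that $M$ is $\thick(G)$-locally coherent by Lemma \ref{l:lcoh}) and sets $D_{n+1}=E_n\sqcup D_n'$ with $D_n'$ supplied by (LC1) at the two new shifts.
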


\begin{proof} Note that $M$ is $\thick(G)$-locally coherent, as we have seen in Lemma \ref{l:lcoh} (f). 
   We construct the requested chain by induction. For all $n\geq 0$, we consider the objects $G_n=\coprod_{i=-n}^nG[n]\in\thick(G)$.  For $n=0$, we use (LC1) in order to construct $\varphi_0:\yon D_0\to M$ such that $D_0\in\D^c$ and $\D^c(G,D_0)\to M(G)$ is surjective. 
   For $n\geq0$, suppose that we have constructed $D_n\in\D^c$, and $\varphi_n:\yon D_n\to M$, which is surjective evaluated at $G_n$. Applying condition (LC2) we get a map $f_n:C_n\to D_n$ in $\D^c$ such that $\varphi_n\yon f_n=0$ and the sequence \[\D^c(G_n,C_n)\to\D^c(G_n,D_n)\to M(G_n)\to0\] 
   is exact.  Complete $f_n$ to a triangle $C_n\stackrel{f_n}\to D_n\stackrel{g_n}\to E_n\rightsquigarrow$. Since $\D^c$ is a triangulated subcategory and $C_n,D_n\in\D^c$, we deduce that $E_n\in\D^c$. Since $M$ is cohomological we get an exact sequence $M(E_{n})\to M(D_n)\to M(C_n)$ in $\ModK$, and the Yoneda lemma provides us the natural transformation $\psi_{n}:\yon E_{n}\to M$ making commutative the diagram
   \[\diagram\yon D_n\rto^{\yon g_n}\dto_{\varphi_n}&\yon E_{n}\dto^{\psi_{n}}\\ M\rdouble &M\enddiagram.\] It follows that $\psi_n$ evaluated at $G_n$ is surjective.
   On the other hand, applying condition (LC1) again, we get $D'_n\in\D^c$ and a natural map $\varphi'_n:\yon D'_n\to M$ such that the induced map \[\D(G[-(n+1)]\sqcup
   G[n+1],D_n')\to M(G[-(n+1)]\sqcup G[n+1])\] is surjective. We take $D_{n+1}=E_n\sqcup D'_n$ and \[\varphi_{n+1}=(\psi_n,\varphi'_n):\yon D_{n+1}\cong\yon E_n\oplus\yon D'_n\to M\] It is clear that $\varphi_{n+1}$ evaluated at $G_{n+1}=G_n\sqcup G[-(n+1)]\sqcup G[n+1]$ is surjective.  Moreover, if we denote by $d_n:D_n\to D_{n+1}$ the composite map $D_n\to E_n\to D_{n+1}$, then $\varphi_{n+1}\yon d_n=\psi_n\yon f_n=\varphi_n$. We denote by $\delta_n:\yon D_n\to\colim_{n\geq0}\yon D_n$ the canonical maps of the colimit and get a natural transformation $\varphi:\colim_{n\geq0}\yon D_n\to M$ such that $\varphi\delta_n=\varphi_n$, for all $n\geq0$.

For an integer $k\in\Z$, the object $G[k]$ is by construction a direct summand in $G_{|k|}$, therefore it is enough to show that the map $\colim_{n\geq0}\D^c(G_k,D_n)\to M(G_k)$ is bijective for all $k\geq0$. Fix therefore $k\geq0$. If $y\in M(G_k)$, then since $\varphi_k$ evaluated at $G_k$ is surjective, we get an element $x_k\in\D^c(G_k,D_k)$ such that $\varphi_k(x_k)=y$. Thus, for $x=\delta_k(x_k)\in\colim_{n\geq0}\D^c(G_k,D_n)$ we have $\varphi(x)=y$, showing that the map $\varphi$ is surjective. On the other hand, if $x\in\colim_{n\geq0}\D^c(G_k,D_n)$ is such that $\varphi(x)=0$, then there is $n\geq 0$, which can be chosen to be greater or equal to $k$ and an element $x_n\in\D^c(G_k,D_n)$ such that $x=\delta_n(x_n)$. Then $\varphi_n(x_n)=0$. Since $n\geq k$ we know that $G_k$ is a direct summand of $G_n$, therefore, $\D^c(G_k,C_n)\to\D^c(G_k,D_n)\to M(G_k)$ is exact.  Hence, $x_n$ is in the image of $\yon f_n$, that is, there is $z_n\in\D^c(G_k,C_n)$ such that $\yon f_n(z_n)=x_n$. It follows that $\yon g_n(x_n)=0$, therefore $\yon d_n(x_n)=0$. This shows that $x=\delta_{n+1}(\yon d_n(x_n))=0$, which proves the claim.

Finally, if $D=\hocolim_{n\geq0}D_n$, then we know by \cite[Lemma 5.8]{Be00} that there is an isomorphism $\yon D\stackrel{\sim}\to\colim_{n\geq0}\yon D_n$. The first part of this lemma gives us a natural transformation $\yon D\to M$ which is invertible evaluated at $G[k]$, for all $k\in\Z$. Since every object in $\thick(G)$ can be constructed in a finite number of steps as direct summands of iterated extensions of objects in $G[\Z]$, the isomorphism $\D(X,D)\stackrel{\sim}\to M(X)$ for all $X\in\thick(G)$ follows by the five lemma.

 \end{proof}


\begin{thm}\label{t:BA}
    Let $\D$ be a triangulated category that has a compact generator $G$. Then every $X\in\D_{lcoh}$ is a homotopy colimit of compact objects. Furthermore, the functor \[\D_{lcoh}\stackrel{\subseteq}\la\D\stackrel{\yon}\la\Mod{\D^c}\] is full, and its essential image consists of $\D^c$-locally coherent cohomological modules. Moreover, every locally coherent cohomological $\D^c$-module $M\in\Mod{\D^c}$ is of projective dimension $\leq1$.  
\end{thm}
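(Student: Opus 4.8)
\textbf{Proof plan for Theorem \ref{t:BA}.}
The plan is to combine Lemma \ref{l:approx} (which supplies, for any $X\in\D_{lcoh}$, a homotopy colimit $D=\hocolim D_n$ of compact objects together with a natural transformation $\yon D\to M$, where $M=\yon X$, that is an isomorphism on all shifts $G[k]$) with the compact generation hypothesis to upgrade that pointwise-on-$G[\Z]$ isomorphism to an honest isomorphism $\yon D\stackrel{\sim}\to\yon X$ in $\Mod{\D^c}$, and then to lift this to an isomorphism $D\stackrel{\sim}\to X$ in $\D$. First I would fix $X\in\D_{lcoh}$; by definition $M:=\yon X$ is $G[\Z]$-locally coherent and cohomological, so Lemma \ref{l:approx} applies and produces $D=\hocolim D_n$ with $D_n\in\D^c$ and a natural map $\vartheta\colon\yon D\to\yon X$ which, by the last sentence of that lemma, restricts to an isomorphism $\D(W,D)\stackrel{\sim}\to\D(W,X)$ for all $W\in\thick(G)=\D^c$. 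That is precisely the statement that $\vartheta$ is an isomorphism of $\D^c$-modules.

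Next I would lift $\vartheta$ to a morphism in $\D$. Here is where Section 3 enters: by Corollary \ref{c:ppdim-for-hocolim}, the object $D=\hocolim_{n\geq0}D_n$ with $D_n\in\Add\D^c$ lies in $\D_d$, i.e. $\ppdim D\leq1$; hence by Proposition \ref{p:ppdim} the map $\yon_{D,X}\colon\D(D,X)\to\Hom_{\D^c}(\yon D,\yon X)$ is surjective, so there is $\theta\colon D\to X$ in $\D$ with $\yon(\theta)=\vartheta$. Since $\vartheta$ is an isomorphism and $\D$ is compactly generated, $\yon$ reflects isomorphisms (as recalled after Lemma \ref{l:pure-tri} via \cite[Lemma 6.2.8]{N01}), so $\theta$ is an isomorphism. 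Thus $X\cong D=\hocolim D_n$ is a homotopy colimit of compact objects, proving the first assertion; and since $D\in\D_d$, also $X\in\D_d$, so by the last clause of Proposition \ref{p:ppdim} we get $\pdim\yon X\leq 1$, which is the final assertion of the theorem.

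It remains to identify the essential image of $\D_{lcoh}\stackrel{\yon}\hookrightarrow\Mod{\D^c}$ and to establish fullness. For one inclusion: if $X\in\D_{lcoh}$ then $\yon X$ is $G[\Z]$-locally coherent by definition, hence $\thick(G)$-locally coherent by Lemma \ref{l:lcoh}(g), i.e. $\D^c$-locally coherent; and it is cohomological because $\D(-,X)$ is. For the converse, given a cohomological $\D^c$-locally coherent module $M\in\Mod{\D^c}$, apply Lemma \ref{l:approx} to $M$ and $G$ to obtain $D=\hocolim D_n$ with $\yon D\to M$ an isomorphism on every $G[k]$, hence (again by the five-lemma argument in that lemma, using $\thick(G)=\D^c$) an isomorphism $\yon D\stackrel{\sim}\to M$ of $\D^c$-modules; by Corollary \ref{c:ppdim-for-hocolim} $D\in\D_d$, and $\yon D\cong M$ shows $M$ satisfies the defining conditions, so $D\in\D_{lcoh}$ and $M\cong\yon D$ lies in the essential image. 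Finally fullness: given $X,Y\in\D_{lcoh}$ and a morphism $\mu\colon\yon X\to\yon Y$ in $\Mod{\D^c}$, since $X\in\D_d$ by the first part, Proposition \ref{p:ppdim} gives a map $g\colon X\to Y$ with $\yon(g)=\mu$; hence $\yon|_{\D_{lcoh}}$ is full. The main obstacle I anticipate is purely the bookkeeping in the first two paragraphs — making sure that the natural transformation from Lemma \ref{l:approx}, a priori only controlled on the single generator $G$ and its shifts, genuinely becomes an isomorphism of functors on all of $\D^c$ via $\thick(G)=\D^c$ and the five-lemma, and then that $\hocolim$ of compacts legitimately lands in $\D_d$ so that Proposition \ref{p:ppdim} can be invoked to both lift and reflect; once those two points are secured, reflecting isomorphisms and reading off $\pdim\leq1$ are immediate.
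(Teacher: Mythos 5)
Your proposal is correct and follows essentially the same route as the paper: apply Lemma \ref{l:approx} to $\yon X$ (using $\thick(G)=\D^c$) to get an isomorphism $\yon D\cong\yon X$ with $D$ a homotopy colimit of compacts, lift it via Corollary \ref{c:ppdim-for-hocolim}/Proposition \ref{p:ppdim}, and conclude by the fact that $\yon$ reflects isomorphisms, with the essential image and $\pdim\leq1$ statements handled exactly as in the paper. You merely make explicit some steps the paper leaves terse (e.g.\ fullness via $X\in\D_d$ and the final projective-dimension claim), which is fine.
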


\begin{proof}
   We use the same symbol $\yon$ for both the initial functor $\D\to\Mod{\D^c}$ and its restriction $\D_{lcoh}\to\Mod{\D^c}$. As we already noted $\D^c=\thick(G)$, since $G$ is a compact generator. By the very definition of $\D_{lcoh}$, for every $X\in\D_{lcoh}$, the cohomological $\D^c$-module $\yon X$ is $G[\Z]$-locally coherent, consequently, it satisfies the hypothesis of Lemma \ref{l:approx}. Then there is a natural isomorphism $\yon D\cong\yon X$, for some $D=\hocolim_{n\geq 0}D_n$, with $D_n\in\D^c$. Furthermore, Corollary \ref{c:ppdim-for-hocolim} implies that this natural transformation comes from a map $D\to X$ in $\D$, which has to be an isomorphism, since $\D$ is compactly generated.  Therefore, the same Corollary \ref{c:ppdim-for-hocolim} implies that $\yon:\D_{lcoh}\to\Mod{\D^c}$ is full. Finally, if $M\in\Mod{\D^c}$ is a locally coherent cohomological $\D^c$-module, then applying Lemma \ref{l:approx} again, we get an isomorphism $\yon D\to M$, for some $D\in\D$. The assumptions in $M$ imply $D\in\D_{lcoh}$. The last statement is now obvious.
\end{proof}


Keep the hypotheses of Theorem \ref{t:BA}, namely $\D$ is a triangulated category with a compact generator $G$. Define:
\[\D^-_{lcoh}=\{X\in\D_{lcoh}\mid\D(G[-n],X)=0\hbox{ for }n\gg 0\},\]
\[\D^+_{lcoh}=\{X\in\D_{lcoh}\mid\D(G[-n],X)=0\hbox{ for }n\ll 0\},\] and $\D^b_{lcoh}=\D^-_{lcoh}\cap\D^+_{lcoh}$.
For brevity, we will introduce the following shorthand. If $M\in\Mod{\D^c}$ is a module, we will say that $M^*(G)$ is $s$-bounded, with $s\in\{-,+,b\}$, if $\Sigma^nM(G)=0$, for $n\gg 0$, $n\ll0$, respectively $|n|\gg0$ (i. e., the graded $R$-module $M^*(G)$ is bounded above, bellow, respectively, two-sided). Therefore, we get $\D^s_{lcoh}=\{X\in\D_{lcoh}\mid\yon X\hbox{ is }s\hbox{-bounded}\}$.

\begin{cor}\label{c:BA} Let $\D$ be a triangulated category that has a compact generator $G$. Then the functor \[\D^s_{lcoh}\stackrel{\subseteq}\la\D\la\Mod{\D^c},\hbox{ with }s\in\{-,+,b\},\] is full and its essential image consists of locally coherent cohomological modules $M\in\Mod{\D^c}$, which are $s$-bounded.
\end{cor}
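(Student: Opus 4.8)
The plan is to deduce everything from Theorem \ref{t:BA} together with the observation, recorded in the paragraph preceding the statement, that
\[
\D^s_{lcoh}=\{X\in\D_{lcoh}\mid\yon X\hbox{ is }s\hbox{-bounded}\},\qquad s\in\{-,+,b\}.
\]
First I would dispose of fullness: $\D^s_{lcoh}$ is by construction a full subcategory of $\D_{lcoh}$, and Theorem \ref{t:BA} asserts that $\yon\colon\D_{lcoh}\to\Mod{\D^c}$ is full; hence for $X,Y\in\D^s_{lcoh}$ the map $\D(X,Y)=\D_{lcoh}(X,Y)\to\Hom_{\D^c}(\yon X,\yon Y)$ is already surjective, so the restriction of $\yon$ to $\D^s_{lcoh}$ is full.

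Next I would identify the essential image by proving the two inclusions. For the easy inclusion, take $X\in\D^s_{lcoh}\subseteq\D_{lcoh}$; then $\yon X$ is cohomological (being $\D(-,X)|_{\D^c}$) and, by Theorem \ref{t:BA}, locally coherent, while it is $s$-bounded by the very definition of $\D^s_{lcoh}$. For the converse inclusion, let $M\in\Mod{\D^c}$ be a locally coherent cohomological module which is $s$-bounded. Theorem \ref{t:BA} provides $D\in\D_{lcoh}$ together with an isomorphism $\yon D\cong M$; since $s$-boundedness of a $\D^c$-module depends only on its isomorphism class (it is the condition $(\yon D)(G[-n])=0$ for the appropriate range of $n$, transported across $\yon D\cong M$), we get that $\yon D$ is $s$-bounded, hence $D\in\D^s_{lcoh}$, and therefore $M$ lies in the essential image of $\yon\colon\D^s_{lcoh}\to\Mod{\D^c}$.

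I do not expect a genuine obstacle here: the corollary is a direct specialization of Theorem \ref{t:BA} obtained by cutting both the source category and the class of target modules along the same boundedness condition, so the only thing to be careful about is that this condition is preserved along the isomorphism $\yon D\cong M$ and that fullness of a functor restricts to full subcategories — both of which are formal. If one wanted to be completely explicit one could also remark that $\D^s_{lcoh}$ is closed under the shifts and (for $s=b$) finite extensions that occur implicitly, but this is not needed for the statement as phrased.
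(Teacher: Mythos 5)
Your proposal is correct and follows essentially the same route as the paper: fullness is inherited by restricting the full functor $\yon\colon\D_{lcoh}\to\Mod{\D^c}$ of Theorem \ref{t:BA} to the full subcategory $\D^s_{lcoh}$, and the essential image is read off from the definition of $\D^s_{lcoh}$ together with the image description in that theorem. Your extra remark that $s$-boundedness is transported along the isomorphism $\yon D\cong M$ is exactly the (implicit) point the paper's terse proof relies on, so nothing is missing.
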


\begin{proof}
   In all three cases, the functors are full, as they are restrictions of the (full) functor $\D_{lcoh}\stackrel{\subseteq}\la\D\la\Mod{\D^c}$. Furthermore, the description of the essential image follows from the definition of the categories $\D^-_{lcoh}$, $\D^+_{lcoh}$ and $\D^b_{lcoh}$.  
\end{proof}

\begin{lemma}\label{l:epi2mc}
   Let $M\in\Mod{\D^c}$ such that $M(G)$ is a finitely generated $R$-module, for some object $G\in\D^c$. Then there are $D\in\D^c$ and a natural transformation $\yon D\to M$ inducing a surjective map $\D^c(G,D)\to M(G).$ 
\end{lemma}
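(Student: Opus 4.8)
The plan is to realize the finitely generated $R$-module $M(G)$ by finitely many elements, lift each of them along the Yoneda correspondence to a natural transformation out of a representable functor, and then assemble these lifts into a single natural transformation out of a finite coproduct of copies of $\yon G$.

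First I would choose a finite set $x_1,\dots,x_k$ of $R$-module generators of $M(G)$. Since $\D^c$ is essentially small and all functors and transformations are $R$-linear, the Yoneda lemma provides, for each $i$, a natural transformation $\alpha_i\colon\yon G\to M$ with $\alpha_i^G(1_G)=x_i$ (here $\yon G=\D^c(-,G)$). Next I would put $D=\bigoplus_{i=1}^k G$. As $\D^c$ is a triangulated subcategory of $\D$ it is closed under finite biproducts, so $D\in\D^c$ and $\yon D\cong\bigoplus_{i=1}^k\yon G$; consequently the $\alpha_i$ combine into a single natural transformation $\alpha=(\alpha_1,\dots,\alpha_k)\colon\yon D\to M$.

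Finally I would verify that $\alpha^G\colon\D^c(G,D)\to M(G)$ is surjective. Evaluating the isomorphism $\yon D\cong\bigoplus_{i=1}^k\yon G$ at $G$ gives $\D^c(G,D)\cong\bigoplus_{i=1}^k\D^c(G,G)$, and the element with $1_G$ in the $i$-th coordinate and $0$ elsewhere is sent by $\alpha^G$ to $\alpha_i^G(1_G)=x_i$. Hence the image of the $R$-linear map $\alpha^G$ is an $R$-submodule of $M(G)$ containing $x_1,\dots,x_k$, and therefore equals $M(G)$.

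There is no genuinely hard step in this argument; the only points deserving a word of care are that $\D^c$ is closed under finite coproducts, so that $D$ is really compact, and that the identification $\yon\bigl(\bigoplus_{i=1}^k G\bigr)\cong\bigoplus_{i=1}^k\yon G$ permits applying the Yoneda correspondence coordinatewise.
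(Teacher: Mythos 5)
Your proof is correct and uses exactly the same two ingredients as the paper's argument: the Yoneda lemma and the finite generation of $M(G)$ over $R$. The only (harmless) difference is that you build $D$ directly as $G^{\oplus k}$ by lifting a chosen finite generating set $x_1,\dots,x_k\in M(G)$ to maps $\yon G\to M$, whereas the paper starts from an arbitrary free presentation $\bigoplus_{i\in I}\yon D_i\to M$ and uses finite generation to cut down to a finite subcoproduct $D=\coprod_{i\in J}D_i$; in both cases the evaluated map $\D^c(G,D)\to M(G)$ hits a generating set and is therefore surjective.
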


\begin{proof}
    Since all modules are epimorphic images of free ones, we know that there is a natural epimorphism $\bigoplus_{i\in I}\yon D_i\to M$ and evaluating it at $G$ we obtain an exact sequence in $\Mod R$:
    \[\bigoplus_{i\in I}\D^c(G,D_i)\to M(G)\to 0.\] Since $M(G)$ is finitely generated, there is a finite subset $J\subseteq I$, such that \[\bigoplus_{i\in J}\D^c(G,D_i)\to M(G)\to 0\] remains exact. Therefore, it is enough to take $D=\coprod_{i\in J}D_i\in\D^c$ and the restricted natural transformation $\yon D\to M$ does the job we need.
\end{proof}
 
\begin{lemma}
If $\G\in\D^c$ is a compact generator for $\D$, with the property that $\D(G,G[n])$ is a finitely generated $R$-module for all $n\in\Z$, then the $R$-module $\D(C,D)$ is finitely generated for all $C,D\in\D^c$
\end{lemma}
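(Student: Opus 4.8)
The plan is to reduce the statement for arbitrary compact $C,D$ to the already-granted hypothesis that $\D(G,G[n])$ is finitely generated over $R$ for all $n$, by exploiting the fact that $G$ is a \emph{compact generator}, so $\D^c = \thick(G)$. First I would observe that the class
\[
\mathcal E = \{ D \in \D^c \mid \D(G[i], D) \text{ is a finitely generated } R\text{-module for all } i \in \Z \}
\]
contains $G[\Z]$ by hypothesis (note $\D(G[i],G[n]) \cong \D(G,G[n-i])$), is closed under finite coproducts and shifts, and is closed under direct summands (a direct summand of a finitely generated $R$-module over a noetherian $R$ is finitely generated — here we should note that the ambient hypothesis is that $R$ is noetherian, as in the standing assumptions of Section~5; more carefully, closure under summands holds because $\D(G[i],-)$ sends a split triangle to a split exact sequence of $R$-modules, and a direct summand of a finitely generated module is finitely generated only when $R$ is noetherian, which is the relevant setting). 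Next, $\mathcal E$ is closed under triangles: if $X \to Y \to Z \rightsquigarrow$ is a triangle with $X, Z \in \mathcal E$, then for each $i$ the long exact sequence obtained by applying $\D(G[i],-)$ sandwiches $\D(G[i],Y)$ between a quotient of $\D(G[i],X)$ and a submodule of $\D(G[i],Z)$, hence $\D(G[i],Y)$ is finitely generated (again using noetherianity of $R$). Therefore $\mathcal E$ is a thick subcategory containing $G$, so $\mathcal E \supseteq \thick(G) = \D^c$; equivalently, $\D(G[i],D)$ is finitely generated for every $D \in \D^c$ and every $i$.

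With this in hand, I would run the symmetric argument on the first variable. Fix such a $D \in \D^c$ and consider
\[
\mathcal F = \{ C \in \D^c \mid \D(C, D[i]) \text{ is a finitely generated } R\text{-module for all } i \in \Z \}.
\]
By the previous paragraph, $G[\Z] \subseteq \mathcal F$ (since $\D(G[i], D[j]) \cong \D(G, D[j-i])$ and we just showed every $\D(G, D')$ with $D' \in \D^c$ is finitely generated, applied to $D' = D[j-i]$). The same three closure properties — finite coproducts, shifts, direct summands, and triangles — hold for $\mathcal F$ by the identical long-exact-sequence and split-exactness arguments applied in the contravariant variable $\D(-, D[i])$. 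Hence $\mathcal F$ is thick and contains $G$, so $\mathcal F = \D^c$, which is exactly the assertion that $\D(C, D)$ is finitely generated for all $C, D \in \D^c$.

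The only real subtlety — and the step I expect to be the main obstacle, or at least the one requiring care — is the closure of $\mathcal E$ and $\mathcal F$ under direct summands, since this genuinely uses that $R$ is noetherian (a direct summand of a finitely generated module need not be finitely generated over an arbitrary $R$). This is consistent with the standing hypotheses of the section, where $R$ is assumed noetherian; I would simply flag this explicitly. Everything else is a routine ``$\thick$-generation'' argument: the core mechanism is that $\D(G[i],-)$ and $\D(-,D[i])$ are (co)homological functors to $\ModR$, the relevant finiteness property is a Serre-type property stable under subquotients, and $G$ being a compact generator means $\D^c$ is the smallest thick subcategory containing $G$, so any thick subcategory containing $G$ is everything.
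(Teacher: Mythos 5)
Your proof is correct and is essentially the paper's own argument: the paper disposes of the lemma in one line as an ``immediate consequence of $\D^c=\thick(G)$'', and your two-variable d\'evissage (first in $D$, then in $C$) is precisely the spelled-out version of that, with noetherianity of $R$ used where submodules of finitely generated modules appear in the long exact sequences. One small slip worth fixing: closure under direct summands does \emph{not} require $R$ noetherian --- a direct summand is a quotient of the module, hence finitely generated over any ring --- so noetherianity is genuinely needed only in the triangle step, as you also correctly use it there.
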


\begin{proof}
   The proof is an immediate consequence of the fact that $\D^c=\thick(G)$.  
\end{proof}

\begin{cor}\label{c:lcoh} Assume the ring $R$ is noetherian. Suppose that $\D$ has a compact generator $G\in\D^c$ such that $\D(G,G[n])\in\modK$ for all $n\in\Z$. If  $M\in\Mod{\D^c}$ is a cohomological module, then $M$ is $\D^c$-locally coherent \iff $M(C)\in\modK$, for all $C\in\D^c$. Consequently \[\D_{lcoh}=\{X\in\D\mid\D(C,X)\in\modK\hbox{ for all }C\in\D^c\}.\] 
\end{cor}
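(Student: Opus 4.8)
The plan is to prove the equivalence ``$M$ is $\D^c$-locally coherent $\iff$ $M(C)\in\modK$ for all $C\in\D^c$'' and then obtain the description of $\D_{lcoh}$ as a formal consequence by specializing to $M=\yon X$ and invoking the fact that $\thick(G)=\D^c$ when $G$ is a compact generator. For the equivalence, I would first dispose of the easy direction. Assume $M$ is $\D^c$-locally coherent; fix $C\in\D^c$. Condition (LC1) applied with $G$ replaced by $C$ (recall that being $\D^c$-locally coherent means $G$-locally coherent for every $G\in\D^c$) gives $D\in\D^c$ and $\alpha\colon\yon D\to M$ with $\alpha^C\colon\D^c(C,D)\to M(C)$ surjective; since $\D^c(C,D)$ is a finitely generated $R$-module by the preceding lemma and $R$ is noetherian, hence coherent, $M(C)$ is finitely generated. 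For finite presentation, apply (LC2) to $\alpha$ to get $f\colon C'\to D$ in $\D^c$ with $\D^c(C,C')\to\D^c(C,D)\to M(C)$ exact; again $\D^c(C,C')$ is a finitely generated $R$-module, so the image of $\D^c(C,C')\to\D^c(C,D)$ is finitely generated, which exhibits a finitely generated kernel for a finite presentation of $M(C)$. Over a noetherian (hence coherent) $R$ this already forces $M(C)\in\modK$.

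For the converse, assume $M$ is cohomological with $M(C)\in\modK$ for all $C\in\D^c$; I must verify (LC1) and (LC2) for an arbitrary $G\in\D^c$. Condition (LC1) is exactly Lemma \ref{l:epi2mc}: since $M(G)$ is a finitely generated $R$-module, there are $D\in\D^c$ and $\yon D\to M$ inducing a surjection $\D^c(G,D)\to M(G)$. For (LC2), by Lemma \ref{l:lcoh}(b) it suffices to check that the kernel $K$ of any $\yon C\to M$ (with $C\in\D^c$) also satisfies (LC1); equivalently, I need to produce $f\colon C'\to D$ in $\D^c$ realizing a finite presentation of $M(G)$ at the object $G$. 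Given $\beta\colon\yon D\to M$ with $\beta^G$ surjective, let $K=\Ker\beta$. Evaluating at $G$, $K(G)=\Ker(\D^c(G,D)\to M(G))$ is a submodule of the finitely generated $R$-module $\D^c(G,D)$, hence finitely generated since $R$ is noetherian; pick finitely many generators, represent them by maps $G\to D$ factoring suitably, and — this is the step requiring care — assemble them into a single map $f\colon C'\to D$ in $\D^c$ with $\beta\circ\yon f=0$ and $\D^c(G,C')\to\D^c(G,D)\to M(G)$ exact. Here one takes $C'$ to be a finite coproduct of shifts of $G$ (which lies in $\D^c$), uses that $\yon$ is full on $\D^c$ (tautologically, $\yon$ restricted to $\D^c$ is the Yoneda embedding) to lift the $R$-module maps to genuine maps in $\D^c$, and checks that the composite with $\beta$ vanishes — which holds after possibly enlarging $C'$, exactly as in the inductive construction inside the proof of Lemma \ref{l:approx} (where the triangle on $f_n$ is formed). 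Then Lemma \ref{l:lcoh}(b) gives that $M$ is $G$-locally coherent, and since $G\in\D^c$ was arbitrary, $M$ is $\D^c$-locally coherent.

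For the final displayed identity, apply the equivalence just proved to $M=\yon X$ for $X\in\D$: by definition $X\in\D_{lcoh}$ iff $\yon X$ is $G[\Z]$-locally coherent, and by Lemma \ref{l:lcoh}(g) together with $\thick(G)=\D^c$ this is the same as $\yon X$ being $\D^c$-locally coherent; the equivalence then translates this into $\D(C,X)=(\yon X)(C)\in\modK$ for all $C\in\D^c$, which is the claimed description.

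\textbf{Main obstacle.} I expect the only real work to be in the (LC2) verification for the converse: packaging finitely many generators of the finitely generated kernel submodule $K(G)\subseteq\D^c(G,D)$ into one morphism $f\colon C'\to D$ in the \emph{triangulated} subcategory $\D^c$ that is genuinely annihilated by $\beta$ (not merely killed after applying $\yon$ at $G$) and that makes the three-term sequence exact at the object $G$. The coherence/noetherianness of $R$ is what guarantees the relevant $R$-modules $\D^c(C,D)$ are coherent, so all the module-level finiteness is automatic; the categorical bookkeeping mirrors the construction already carried out in Lemma \ref{l:approx}, so it should go through without surprises.
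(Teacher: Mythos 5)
Your argument is correct and follows essentially the same route as the paper: the easy direction via (LC1), exhibiting $M(C)$ as a quotient of the finitely generated module $\D^c(C,D)$, and the converse by combining noetherianness of $R$ with Lemma \ref{l:epi2mc} and Lemma \ref{l:lcoh}(b) applied to kernels, then specializing to $M=\yon X$ and using $\thick(G)=\D^c$. The only remark is that your flagged ``step requiring care'' is automatic: since the components $g_i$ of $f\colon C'=G^{\oplus k}\to D$ lie in $K(G)=\Ker\beta^G$, the Yoneda correspondent $\beta^{C'}(f)\in M(C')\cong M(G)^{k}$ of $\beta\circ\yon f$ vanishes componentwise, so no enlargement of $C'$ (nor any appeal to the construction in Lemma \ref{l:approx}, whose vanishing step relies on (LC2), the very condition being verified) is needed --- equivalently, one may simply apply Lemma \ref{l:epi2mc} to the kernel $K$, which is exactly what the paper does.
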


\begin{proof} Clearly, $M(G[n])\in\modK$ for all $n\in\Z$ \iff $M(C)\in\modK$ for all $C\in\D^c$. 

Let $C\in\D^c$. If $M(C)\in\modK$ and $K$ is the kernel of a natural transformation $\yon D\to M$, with $D\in\D^c$, then the noetherian assumption on $R$ implies that $K(C)\in\modK$. Therefore, $M$ is $\D^c$-locally coherent, by Lemma \ref{l:epi2mc}. Conversely, if $M$ is $\D^c$-locally coherent, then $M(C)$ is a quotient of $\D(C,D)$, hence it belongs to $\modK$.   
\end{proof}

\section{Application: the case of derived categories over dg-algebras}

In this Section we will particularize our general results for the case of derived categories of dg-algebras. Recall that a dg-algebra over $R$ is a $\Z$-graded algebra $A=\bigoplus_{n\in\Z}A^n$, together with a differential $d:A\to A$, which is a $R$-linear map, homogeneous of degree 1, satisfying $d^2=0$ and the graded Leibnitz rule, namely $d(ab)=d(a)b+(-1)^nad(b)$, for all $a\in A^n$ and all $b\in A$. We follow \cite{K94} in the construction of the following categories: The category of all dg-$A$-modules, the homotopy category of $A$ and the derived category of $A$; this last category is denoted $\Der A$, is compactly generated with $A$ a single compact generator. The subcategory of compact objects in $\Der A$ consists of the so-called perfect complexes and is denoted by $\Dep A$. Note that perfect complexes can be described as direct summands of an object in $\Der A$, which is represented by a differential graded module, having a finite filtration by differential graded submodules, whose quotients are finite direct sums of shifts of $A$ (see \cite[Tag 09R3]{stacks}).  Recall also that the dg-algebra $A$ is called {\em proper}, respectively {\em regular}, if $\bigoplus_{n\in\Z}H^nA\in\modK$, respectively $A$ is a strong generator of $\Dep A$, see \cite[Definition 1.5]{O23}.  

\begin{prop}\label{p:B4dg}
Let $R$ be a noetherian ring, and let $A$ be a proper and regular dg-algebra over $R$, satisfying also the property that $H^n(A)\in\modK$, for all $n\in\Z$. Then a module $M\in\Mod{\Dep{A}}$ is representable if and only if $M$ is cohomological and \[\bigoplus_{n\in\Z}\Sigma^nM(A)\in\modK\] (that is, $M$ is $A$-finite). 
\end{prop}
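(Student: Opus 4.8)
The plan is to verify the hypotheses of Corollary \ref{c:rep-4-small} for the category $\D=\Dep A$ with strong generator $G=A$. Since $A$ is regular, by definition $A$ is a strong generator of $\Dep A$, so $\D=\thick_n(A)$ for some $n\geq 1$. The ring $R$ is noetherian by hypothesis, so that part of Corollary \ref{c:rep-4-small} is in place. What remains is to check that the functor $\D(-,A)$ is $A$-finite, i.e. that the graded $R$-module $\bigoplus_{n\in\Z}\D(A[-n],A)=\bigoplus_{n\in\Z}H^n(A)$ is finitely generated; but this is exactly the properness hypothesis $\bigoplus_{n\in\Z}H^n(A)\in\modK$ (which, since $R$ is noetherian, is equivalent to $H^n(A)\in\modK$ for all $n$ together with vanishing for $|n|\gg 0$). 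With these three facts, Corollary \ref{c:rep-4-small} gives: $M:\D\to\ModR$ is a direct summand of a representable functor iff $M$ is cohomological and $A$-finite, and if $\D$ is karoubian then ``direct summand of representable'' upgrades to ``representable''.

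The one genuine point to address is therefore that $\Dep A$ is karoubian. This is standard: $\Dep A=\Der A^c$ is the category of compact objects in the compactly generated triangulated category $\Der A$, and idempotents split in $\Der A$ (it has countable coproducts, so it is idempotent complete by the Bökstedt--Neeman argument); moreover a direct summand in $\Der A$ of a compact object is compact, hence lies in $\Dep A$. So $\Dep A$ is karoubian, and the ``representable'' conclusion of Corollary \ref{c:rep-4-small} applies verbatim. I would either cite this or include the one-line argument.

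I do not expect a serious obstacle: the statement is essentially a dictionary translation, reading ``regular'' as ``strong generator $A$'' and ``proper'' (plus noetherian $R$) as ``$\D(-,A)$ is $A$-finite'', and then invoking Corollary \ref{c:rep-4-small}. The only thing to be careful about is matching the two notions of $A$-finiteness: the corollary's hypothesis ``$\D(-,A)$ is $G$-finite'' means $\bigoplus_n \D(G[-n],G)$ is a finitely generated $R$-module, and one should note that for $G=A$ this graded module is $\bigoplus_n H^n(A)$, so properness is precisely what is needed; similarly the conclusion's condition $\bigoplus_{n\in\Z}\Sigma^nM(A)\in\modK$ is literally the definition of ``$M$ is $A$-finite'' from Section 2. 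Assembling these identifications and then citing Corollary \ref{c:rep-4-small} completes the proof.
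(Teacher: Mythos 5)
Your proposal is correct and follows essentially the same route as the paper: read regularity as ``$A$ is a strong generator of $\Dep A$'', read properness as ``$\Dep A(-,A)$ is $A$-finite'' (via $\Dep A(A[-n],A)\cong H^n(A)$), and invoke Corollary \ref{c:rep-4-small}. Your explicit verification that $\Dep A$ is karoubian is a point the paper's proof leaves implicit, but it is not a different argument.
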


\begin{proof}
Since $A$ is regular, $\Dep A$ is strongly generated by $A$. Moreover, the properness can be reformulated by saying that the module $\Dep A(-,A)$ is $A$-finite.  Then the conclusion here follows from Corollary \ref{c:rep-4-small}. 
\end{proof}

Furthermore, we consider the subcategory:
\[\Dec A=\{X\in\Der A\mid H^i(X)\in\modK\hbox{ for all }i\in\Z\}.\]
A priori, we did not impose any finiteness conditions on $A$.  Even in this generality, we can define
\[\Decm A=\{H\in\Dec A\mid H^n(X)=0\hbox{ for }n\gg0\},\] 
\[\mathbf{D}^+_{coh}(A)=\{H\in\Dec A\mid H^n(X)=0\hbox{ for }n\ll0\},\] and
\[\Derb A=\{H\in\Dec A\mid H^n(X)=0\hbox{ for }|n|\gg0\}.\] 

We observe that, by Corollary \ref{c:lcoh}, $M\in\Mod{\Dep A}$ is $\Dep A$-locally coherent \iff $\Sigma^n M(A)\in\modK$, for all $n\in\Z$. Furthermore, this is equivalent to $M:\Dep A\to\modK$; we will simply call {\em locally coherent} a functor that satisfies this property. 

\begin{prop}\label{p:BA4dg}
   Let $R$ be a noetherian ring, and let $A$ be a dg-algebra over $R$, with the property that $H^n(A)\in\modK$, for all $n\in\Z$. Then the following statements hold: 
   \begin{enumerate}[{\rm (a)}] 
        \item The functor \[\Dec A\stackrel{\subseteq}\la\Der A\la\Mod{\Dep A}\] is full, and its essential image consists of locally coherent cohomological modules. 
        \item $\ppdim X\leq 1$ holds for every $X\in\Dec A$. 
        \item For $s\in\{-,+,b\}$, the functor \[\mathbf{D}^s_{coh}(A)\stackrel{\subseteq}\la\Der{A}\la\Mod{\Dep{A}}\] is full and its essential image consists of locally coherent cohomological modules $M\in\Mod{\Dep{A}}$, which are $s$-bounded.  
    \end{enumerate}
\end{prop}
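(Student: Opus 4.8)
The plan is to deduce all three statements from the results of Section 4 by identifying the abstract categories $\D_{lcoh}$, $\D_d$, $\D^s_{lcoh}$ with the concrete ones $\Dec A$, and by verifying that the hypotheses of Theorem \ref{t:BA} and Corollary \ref{c:BA} are met. We take $\D=\Der A$, which is compactly generated with compact generator $G=A$, so that $\D^c=\Dep A=\thick(A)$. The first thing to check is that the running hypothesis $H^n(A)\in\modK$ for all $n$, together with $R$ noetherian, puts us in the situation of Corollary \ref{c:lcoh}: indeed $\D(A,A[n])=\Hom_{\Der A}(A,A[n])\cong H^n(A)\in\modK$ for all $n\in\Z$, which is exactly the hypothesis there. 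Hence Corollary \ref{c:lcoh} applies and gives \[\D_{lcoh}=\{X\in\Der A\mid\Der A(C,X)\in\modK\hbox{ for all }C\in\Dep A\}.\] It then remains to observe that this coincides with $\Dec A$: for $X\in\Der A$ and $C=A[n]$ we have $\Der A(A[n],X)\cong H^{-n}(X)$, so $X\in\D_{lcoh}$ forces all cohomologies of $X$ to be in $\modK$, i.e. $X\in\Dec A$; conversely, if $H^i(X)\in\modK$ for all $i$, then since every $C\in\Dep A$ is built in finitely many steps by shifts, extensions and summands from $A$, a spectral sequence / five-lemma dévissage shows $\Der A(C,X)$ is finitely generated (here we again use that $R$ is noetherian, to pass exactness through the finitely many extension steps). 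This is the one genuinely computational point, but it is routine. Similarly, the $s$-bounded condition on $\yon X$ translates, via $\D(A[-n],X)\cong H^n(X)$, exactly into the vanishing conditions defining $\Decm A$, $\mathbf{D}^+_{coh}(A)$, $\Derb A$, so $\D^s_{lcoh}=\mathbf{D}^s_{coh}(A)$.

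With these identifications in hand, statement (a) is immediate from Theorem \ref{t:BA}: the restriction of $\yon$ to $\D_{lcoh}=\Dec A$ is full, and its essential image consists of the $\D^c$-locally coherent cohomological modules. To see that "locally coherent" in the sense of Section 4 matches "locally coherent" in the sense used in the statement, we invoke the remark made just before the proposition: by Corollary \ref{c:lcoh}, a cohomological $M\in\Mod{\Dep A}$ is $\Dep A$-locally coherent if and only if $\Sigma^nM(A)=M(A[-n])\in\modK$ for all $n$, i.e. if and only if $M(C)\in\modK$ for all $C\in\Dep A$, i.e. if and only if $M$ factors as a functor $\Dep A\to\modK$; this is precisely the property we agreed to call "locally coherent". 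Statement (b) is then the last sentence of Theorem \ref{t:BA}, read through the identification $\D_{lcoh}=\Dec A$: every $X\in\Dec A$ has $\yon X$ of projective dimension $\leq1$, and by Proposition \ref{p:ppdim} this means $\ppdim X\leq1$. (Alternatively, and more directly, Theorem \ref{t:BA} already asserts that every $X\in\D_{lcoh}$ is a homotopy colimit of compact objects, so Corollary \ref{c:ppdim-for-hocolim} gives $\ppdim X\leq1$ on the nose.) Statement (c) is obtained in the same way from Corollary \ref{c:BA}, using $\D^s_{lcoh}=\mathbf{D}^s_{coh}(A)$ and the translation of $s$-boundedness into the cohomology-vanishing conditions.

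The main obstacle, such as it is, is purely bookkeeping: one must be careful that the notion of local coherence of Section 4 is stated in terms of the single generator $G=A$ (conditions (LC1) and (LC2) only test against $G$), and verify that Lemma \ref{l:lcoh}(g) indeed upgrades $G[\Z]$-local coherence to $\thick(G)=\Dep A$-local coherence, so that the two meanings of $\D_{lcoh}$ agree and the essential-image description in Theorem \ref{t:BA} is the one we want. Once Corollary \ref{c:lcoh} is invoked, this is automatic. No further approximability or regularity hypothesis on $A$ is needed, which is the point emphasized in the introduction.
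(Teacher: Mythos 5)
Your proposal is correct and follows essentially the same route as the paper: identify $\D_{lcoh}=\Dec A$ (and $\D^s_{lcoh}=\mathbf{D}^s_{coh}(A)$) via Corollary \ref{c:lcoh} and the isomorphism $H^n(X)\cong\Der A(A[-n],X)$, then quote Theorem \ref{t:BA} for (a) and (b) and Corollary \ref{c:BA} for (c). The only difference is that you spell out the d\'evissage and the verification of the hypothesis $\D(A,A[n])\cong H^n(A)\in\modK$, which the paper leaves implicit inside Corollary \ref{c:lcoh}.
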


\begin{proof} 
Statements (a) and (b) follow from Theorem \ref{t:BA} and statement (c) follows from Corollary \ref{c:BA}. To be more precise, put $\D=\Der A$. Then, it is enough to observe that for a complex $X\in\Der R$, we have $H^n(X)\cong\Der A(A[-n],X)$, therefore $\D_{lcoh}=\Dec A$, according to Corollary \ref{c:lcoh}. Analogous identifications can be made for $\Decm A$, $\mathbf{D}^+_{coh}(A)$, and $\Derb A$. 
\end{proof}

It is sometimes useful to assume that $A$ is non-positive, that is, $A^n=0$, for $n>0$. If this is the case, then we know by \cite[Remark 3.3]{N18}, that the category $\Der A$ is approximable in the sense of \cite[Definition 0.21]{N18}. Moreover, the pair $(\D^{\leq0},\D^{\geq0})$
is a t-structure, where
\[\D^{\leq0}=\{X\in\Der A\mid H^n(X)=0\hbox{ for }n>0\},\] \[\D^{\geq0}=\{X\in\Der A\mid H^n(X)=0\hbox{ for }n<0\}.\] This t-structure is generated by the compact generator $A$ of $\Der A$ in the sense that $\D^{\leq0}=\{X\in\Der A\mid \Der A(A,X[n])=0\hbox{ for }n>0\}$; in particular, this t-structure is in the preferred equivalence class, defined in \cite[Definition 0.14]{N18}. As usual, denote $\D^{\leq n}=\D^{\leq 0}[-n]$ and $\D^{\geq n}=\D^{\geq 0}[-n]$, for all $n\in\Z$. Consider further $\D^-=\bigcup_{n\in\Z}\D^{\leq n}$ and $\D^+=\bigcup_{n\in\Z}\D^{\geq n}$. Therefore, we get $\Decm A=\D^-\cap\Dec A$ and $\Derb A=\D^-\cap\D^+\cap\Dec A$. 

\begin{prop}\label{p:ff4dg}
   Let $R$ be a noetherian ring, and let $A$ be a non-positive dg-algebra over $R$, with the property that $H^n(A)\in\modK$, for all $n\in\Z$. Then the functor \[\Derb A\stackrel{\subseteq}\la\Der{A}\la\Mod{\Dep{A}}\] is not only full, but also faithful.
\end{prop}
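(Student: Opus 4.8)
The plan is to build on Proposition~\ref{p:BA4dg}(a), which already tells us that the restriction of $\yon$ to $\Derb A$ is full; the only thing left is faithfulness. Since the kernel ideal of $\yon$ consists exactly of the phantom maps, this amounts to showing that there is no nonzero phantom map between two objects of $\Derb A$. So fix $X,Y\in\Derb A$ together with a map $f\colon X\to Y$ in $\Der A$ with $\yon(f)=0$, and aim at $f=0$.

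The first step is to present $X$ as a homotopy colimit of compacts with controlled approximation. By Theorem~\ref{t:BA} we already have $X\cong\hocolim_{n\geq 0}D_n$ with $D_n\in\Dep A$. Since $A$ is non-positive, $\Der A$ is approximable with the preferred t-structure $(\D^{\leq 0},\D^{\geq 0})$, and because $X\in\Derb A$ is bounded above, one may --- by the argument of \cite[Section~7]{N18}, in the sharpened ``two-sided'' form of \cite{B22} --- choose the structure maps $D_n\to X$ so that $\mathrm{cone}(D_n\to X)\in\D^{\leq c_n}$ with $c_n\to-\infty$. Forming the octahedron on $D_n\to D_{n+1}\to X$ then shows that $F_n:=\mathrm{cone}(D_n\to D_{n+1})$ also lies in $\D^{\leq c'_n}$ with $c'_n\to-\infty$.

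Next I would feed the homotopy-colimit triangle $\coprod_{n\geq 0}D_n\stackrel{1-\mathrm{shift}}\la\coprod_{n\geq 0}D_n\la X\rightsquigarrow$ into $\Der A(-,Y)$, obtaining the Milnor exact sequence
\[0\la\lim\nolimits^1_n\,\Der A(D_n[1],Y)\la\Der A(X,Y)\la\lim\nolimits_n\Der A(D_n,Y)\la 0.\]
Each $D_n$ is compact, so the hypothesis $\yon(f)=0$ forces the composite $D_n\to X\to Y$ to vanish; hence the image of $f$ in $\lim_n\Der A(D_n,Y)$ is $0$, so $f$ is the image of some element of $\lim\nolimits^1_n\Der A(D_n[1],Y)$. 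Thus it suffices to prove that this $\lim^1$ group vanishes. For this I would use that $Y\in\Derb A$ is bounded below, say $Y\in\D^{\geq b}$: applying $\Der A(-,Y)$ to $D_n\to D_{n+1}\to F_n\rightsquigarrow$ shows that the transition map $\Der A(D_{n+1}[1],Y)\to\Der A(D_n[1],Y)$ is an isomorphism as soon as $\Der A(F_n,Y[j])=0$ for $j\in\{-1,0\}$, and this vanishing holds for all large $n$ by the t-structure orthogonality $\Der A(\D^{\leq p},\D^{\geq q})=0$ for $q>p$, since $F_n\in\D^{\leq c'_n}$ with $c'_n\to-\infty$ while the relevant shifts of $Y$ remain in $\D^{\geq b-1}$. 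Hence the inverse system $\bigl(\Der A(D_n[1],Y)\bigr)_n$ has eventually bijective transition maps, so it is Mittag--Leffler and $\lim\nolimits^1_n\Der A(D_n[1],Y)=0$; therefore $f=0$.

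The step I expect to be the real obstacle is producing the approximating tower $D_0\to D_1\to\cdots$ for $X$ whose cones become arbitrarily co-connected while still assembling to $X$ as a homotopy colimit: Theorem~\ref{t:BA} supplies only a bare homotopy-colimit presentation, so one genuinely has to invoke the approximability of $\Der A$ (available precisely because $A$ is non-positive) and replace the $D_n$ by suitable truncations. Once that tower is in hand, everything else --- the Milnor sequence, the identification of $f$ as an element of $\lim^1$, and the orthogonality estimate --- is formal manipulation with the t-structure.
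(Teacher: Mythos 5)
Your argument is correct in substance, but it takes a different (and more self-contained) route than the paper: the paper disposes of faithfulness in one line, by observing that $\Der A$ is approximable for non-positive $A$ and citing \cite[Theorem 0.3]{N18}, whereas you essentially reconstruct the mechanism behind that theorem --- an approximating tower of compacts $D_0\to D_1\to\cdots$ with $X\cong\hocolim D_n$ and increasingly co-connective cones, the Milnor sequence for $\Der A(-,Y)$, and a Mittag--Leffler argument killing the $\lim^1$ term, using the t-structure orthogonality against the bounded-below object $Y$. The formal part of your proof (octahedron giving $\mathrm{cone}(D_n\to D_{n+1})\in\D^{\leq c'_n}$ with $c'_n\to-\infty$, identification of a phantom $f$ with an element of $\lim^1\Der A(D_n[1],Y)$, vanishing of that $\lim^1$) is airtight. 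The one input you cannot make formal is exactly the one you flag: that an object of $\Derb A$, defined by coherence of its cohomology, admits such a tower at all. This is the identification of $\Derb A$ (or at least $\Decm A$) with Neeman's intrinsically defined subcategory of objects approximable by compacts with cones in $\D^{\leq -n}$, and it is a genuine theorem from the approximability literature (\cite[Section 7]{N18}, \cite{B22}), valid here because $A$ is non-positive, $R$ is noetherian and $H^n(A)\in\modK$. Note that the paper's citation of \cite[Theorem 0.3]{N18} silently relies on the same identification (the paper itself warns about this point in the scheme case), so on this score the two proofs are on equal footing. What your route buys is transparency and a slightly stronger statement: the argument shows that any phantom map from an object admitting such a tower to any cohomologically bounded-below object vanishes, with no use of fullness; what the paper's citation buys is brevity and the delegation of the tower construction and the category identification to \cite{N18}.
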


\begin{proof} The faithfulness of this functor follows from \cite[Theorem 0.3]{N18}, since in this case the category $\Der A$ is approximable.
\end{proof}

We end this paper with some considerations about the way our results apply for schemes.  
We consider a scheme $\mathbb{X}$ that is quasi-compact and quasi-separated. Then it is known that the category $\Qcoh\mathbb{X}$ of quasi-coherent $\mathbb{X}$-modules is Grothendieck, and the category $\Derq{\mathbb{X}}$ of $\mathbb{X}$-modules with quasi-coherent cohomology is compactly generated and has a single compact generator $G$. Moreover, compact objects in $\Derq{\mathbb{X}}$ are precisely perfect complexes; recall that a complex is perfect means that it is locally quasi-isomorphic a bounded complex of vector bundles; see \cite[Theorem 3.1.1]{BvdB03}. We denote by $\Dep{\mathbb{X}}=\Derq{\mathbb{X}}^c$.  
According to \cite[Corollary 3.1.8]{BvdB03}, 
$\Derq{\mathbb{X}}$ is equivalent to the derived category of dg-algebra $A=A(\mathbb{X})$ with bounded cohomology. Using this observation, we can apply Proposition \ref{p:BA4dg} for the category $\Derq{\mathbb{X}}$. To be more precise, the equivalence
\[\Derq{\mathbb{X}}\to\Der A\] sends the compact generator $G$ to $A\in\Der A$. Using this observation we can see that  \begin{align*} \Derq{\mathbb{X}}_{lcoh}&=\{X\in\Derq{\mathbb{X}}\mid\D(C,X)\in\modK\hbox{ for all }C\in\Dep{\mathbb{X}}\}\\ &=\{X\in\Derq{\mathbb{X}}\mid\D(G[n],X)\in\modK\hbox{ for all }n\in\Z\}\end{align*} is exactly the image of $\Dec A$ under converse $\Der A\to\Derq{\mathbb{X}}$ of the above equivalence. We don't know, but it seems to be very interesting, if $\Derq{\mathbb{X}}_{lcoh}$ can be characterized cohomologically in the same vein as $\Dec A$ can. We also note that $\Derq{\mathbb{X}}^-_{lcoh}$, $\Derq{\mathbb{X}}^+_{lcoh}$, and $\Derq{\mathbb{X}}^b_{lcoh}$ are images of $\Decm A$, $\mathbf{D}^+_{coh}(A)$, respectively $\Derb A$. We also record the property stated in \cite[Proposition 6.9]{R08}: For $X\in\Derq{\mathbb{X}}$, the $\Dep{\mathbb{X}}$-module $(\yon X)^*(G)$ is $s$-bounded, with $s\in\{-,+,b\}$, \iff $H^n(X)=0$ for $n\gg0$, respectively, $n\ll0$, or $|n|\gg0$, consequently the boundedness property can be characterized cohomologically. If $\mathbb{X}$ is quasi-compact and separated, then the category $\Derq{\mathbb{X}}$ is equivalent to $\Der{\Qcoh\mathbb{X}}$ and it is approximable, see \cite[Example 3.6]{N18}. Therefore, \cite[Theorem 0.3]{N18} applies in this case in order to construct a fully faithful functor similar to the one in Proposition \ref{p:ff4dg}, even if we are not sure that the corresponding dg-algebra $A(\mathbb{X})$ is non-positive. However, to avoid confusion, we have to point out that the domain of this functor in \cite[Theorem 0.3]{N18} is not generally known to coincide with $\Derb{\mathbb{X}}$. In order to have this equality, we have to assume more restrictive conditions on $\mathbb{X}$, e. g., to be noetherian. If $X$ is proper over the noetherian ring $R$, then the condition $H^n(A)\in\modK$ is automatically satisfied for $A=A(\mathbb{X})$.

Assume now that $\mathbb{X}$ is projective over the noetherian ring $R$, that is, $\mathbb{X}$ is a closed subscheme of the projectivization of a vector bundle over $\mathrm{Spec}(R)$ (see \cite[Definition 1.1]{B22}). Then by \cite[Theorem 1.5]{B22} we know 
\[\Derq{\mathbb{X}}_{lcoh}=\{X\in\Derq{\mathbb{X}}\mid H^n(X)\in\modK\hbox{, for all }n\in\Z\},\]
that is, $\Derq{\mathbb{X}}_{lcoh}=\Dec{\mathbb{X}}$ can be characterized cohomologically in this case.  Therefore, Proposition \ref{p:BA4dg} can be stated in this case simply by replacing $A$ everywhere with $\mathbb{X}$. Furthermore, an analogous statement with Proposition \ref{p:ff4dg} holds true, but with the same warning as before, the involved dg-algebra is not necessary known to be non-positive.

\end{document}